\documentclass{amsart}
\usepackage{amsfonts}

\setcounter{MaxMatrixCols}{10}

\newtheorem{theorem}{Theorem}
\theoremstyle{plain}

\newtheorem{corollary}{Corollary}

\newtheorem{definition}{Definition}

\newtheorem{lemma}{Lemma}

\newtheorem{remark}{Remark}

\numberwithin{equation}{section}
\input{tcilatex}

\begin{document}
\title[Ostrowski type inequalities]{Ostrowski type inequalities for
harmonically $s$-convex functions via fractional integrals}
\author{\.{I}mdat \.{I}\c{s}can}
\address{Department of Mathematics, Faculty of Arts and Sciences,\\
Giresun University, 28100, Giresun, Turkey.}
\email{imdati@yahoo.com, imdat.iscan@giresun.edu.tr}
\subjclass[2000]{26A33, 26A51, 26D15}
\keywords{Harmonically $s$-convex function, Ostrowski type inequality,
Fractional integtrals, hypergeometric function.}

\begin{abstract}
In this paper, a new identity for fractional integrals is established. Then
by making use of the established identity, some new Ostrowski type
inequalities for harmonically $s$-convex functions via Riemann--Liouville
fractional integral are established.
\end{abstract}

\maketitle

\section{Introduction}

Let $f:I\mathbb{\rightarrow R}$, where $I\subseteq \mathbb{R}$ is an
interval, be a mapping differentiable in $I^{\circ }$ (the interior of $I$)
and let $a,b\in I^{\circ }$ with $a<b.$ If $\left\vert f^{\prime
}(x)\right\vert \leq M,$ for all $x\in \left[ a,b\right] ,$ then the
following inequality holds%
\begin{equation}
\left\vert f(x)-\frac{1}{b-a}\int_{a}^{b}f(t)dt\right\vert \leq M(b-a)\left[ 
\frac{1}{4}+\frac{\left( x-\frac{a+b}{2}\right) ^{2}}{\left( b-a\right) ^{2}}%
\right]  \label{1-1}
\end{equation}%
for all $x\in \left[ a,b\right] .$ This inequality is known in the
literature as the Ostrowski inequality (see \cite{O38}), which gives an
upper bound for the approximation of the integral average $\frac{1}{b-a}%
\int_{a}^{b}f(t)dt$ by the value $f(x)$ at point\ $x\in \left[ a,b\right] $.
For some results which generalize, improve and extend the inequalities(\ref%
{1-1}) we refer the reader to the recent papers (see \cite{ADDC10,I13,L12} ).

In \cite{HM94}, Hudzik and Maligranda considered the following class of
functions:

\begin{definition}
A function $f:I\subseteq 
\mathbb{R}
_{+}\rightarrow 
\mathbb{R}
$ where $%
\mathbb{R}
_{+}=\left[ 0,\infty \right) $, is said to be $s$-convex in the second sense
if%
\begin{equation*}
f\left( \alpha x+\beta y\right) \leq \alpha ^{s}f(x)+\beta ^{s}f(y)
\end{equation*}%
for all $x,y\in I$ and $\alpha ,\beta \geq 0$ with $\alpha +\beta =1$ and $s$
fixed in $\left( 0,1\right] $. They denoted this by $K_{s}^{2}.$
\end{definition}

It can be easily seen that for $s=1$, $s$-convexity reduces to ordinary
convexity of functions defined on $[0,\infty )$. For some recent results and
generalizations concerning $s$-convex functions see \cite%
{ADK11,AKO11,DF99,HBI09,I13,I13b,KBO07,L12,S12}.

In \cite{I13a}, the author gave harmonically convex and established
Hermite-Hadamard's inequality for harmonically convex functions as follows:

\begin{definition}
Let $I\subset 
\mathbb{R}
\backslash \left\{ 0\right\} $ be a real interval. A function $%
f:I\rightarrow 
\mathbb{R}
$ is said to be harmonically convex, if \ 
\begin{equation}
f\left( \frac{xy}{tx+(1-t)y}\right) \leq tf(y)+(1-t)f(x)  \label{1-3}
\end{equation}%
for all $x,y\in I$ and $t\in \lbrack 0,1]$. If the inequality in (\ref{1-3})
is reversed, then $f$ is said to be harmonically concave.
\end{definition}

\begin{theorem}
Let $f:I\subset 
\mathbb{R}
\backslash \left\{ 0\right\} \rightarrow 
\mathbb{R}
$ be a harmonically convex function and $a,b\in I$ with $a<b.$ If $f\in
L[a,b]$ then the following inequalities hold 
\begin{equation}
f\left( \frac{2ab}{a+b}\right) \leq \frac{ab}{b-a}\dint\limits_{a}^{b}\frac{%
f(x)}{x^{2}}dx\leq \frac{f(a)+f(b)}{2}.  \label{1-4}
\end{equation}%
The \ above inequalities are sharp.
\end{theorem}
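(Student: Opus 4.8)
The plan is to mimic the classical proof of the Hermite--Hadamard inequality, replacing the affine interpolation $t\mapsto (1-t)a+tb$ by the harmonic interpolation $t\mapsto \frac{ab}{ta+(1-t)b}$. The cornerstone is a change of variables. Setting $x=\frac{ab}{ta+(1-t)b}$ for $t\in[0,1]$, one has $x=a$ at $t=0$, $x=b$ at $t=1$, the map is increasing (the common sign of $a,b$ keeps the denominator positive), and a direct computation gives $\frac{dx}{x^{2}}=\frac{b-a}{ab}\,dt$. Hence
\begin{equation*}
\frac{ab}{b-a}\int_{a}^{b}\frac{f(x)}{x^{2}}\,dx=\int_{0}^{1}f\!\left(\frac{ab}{ta+(1-t)b}\right)dt .
\end{equation*}
Applying $t\mapsto 1-t$ shows the right-hand side is unchanged when $ta+(1-t)b$ is replaced by $tb+(1-t)a$; both forms will be used. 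Since $f\in L[a,b]$, the integrand on the right is integrable over $[0,1]$, so every integral below is finite.

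For the second inequality in \eqref{1-4}, apply the definition \eqref{1-3} of harmonic convexity with the endpoints $x=a$, $y=b$: for every $t\in[0,1]$,
\begin{equation*}
f\!\left(\frac{ab}{ta+(1-t)b}\right)\le tf(b)+(1-t)f(a).
\end{equation*}
Integrating over $t\in[0,1]$ and using the identity above yields $\frac{ab}{b-a}\int_{a}^{b}\frac{f(x)}{x^{2}}\,dx\le \frac{f(a)+f(b)}{2}$.

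For the first inequality, fix $t\in[0,1]$ and set $u=\frac{ab}{ta+(1-t)b}$ and $v=\frac{ab}{tb+(1-t)a}$. Then $\frac{1}{u}+\frac{1}{v}=\frac{a+b}{ab}$, so that $\frac{uv}{\frac12 u+\frac12 v}=\frac{2ab}{a+b}$. Invoking \eqref{1-3} with this pair $u,v$ and parameter $\frac12$ gives
\begin{equation*}
f\!\left(\frac{2ab}{a+b}\right)\le \frac12 f\!\left(\frac{ab}{ta+(1-t)b}\right)+\frac12 f\!\left(\frac{ab}{tb+(1-t)a}\right).
\end{equation*}
Integrating in $t$ over $[0,1]$ and applying the change-of-variables identity (in each of its two forms) to the two terms on the right produces $f\!\left(\frac{2ab}{a+b}\right)\le \frac{ab}{b-a}\int_{a}^{b}\frac{f(x)}{x^{2}}\,dx$, completing \eqref{1-4}. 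Sharpness is then checked by testing $f\equiv 1$ (or $f(x)=1/x$), for which both inequalities become equalities.

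The only delicate point is the change of variables: computing the Jacobian $\frac{dx}{x^{2}}=\frac{b-a}{ab}\,dt$ correctly and tracking the orientation of the limits so that no spurious sign remains. Once that identity is in hand, each half of the theorem is a single application of \eqref{1-3} followed by an integration over $[0,1]$.
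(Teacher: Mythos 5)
Your proof is correct, and it is the standard argument for this result: the paper itself states this theorem without proof (it is imported from the author's reference [I13a]), and the known proof there proceeds exactly as you do, via the substitution $x=\frac{ab}{ta+(1-t)b}$ with $\frac{dx}{x^{2}}=\frac{b-a}{ab}\,dt$ followed by one application of the harmonic convexity inequality for each half. Your verification of sharpness with $f(x)=1/x$ (for which harmonic convexity holds with equality) is also the right choice.
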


Let us recall the following special functions:

(1) The Beta function:%
\begin{equation*}
\beta \left( x,y\right) =\frac{\Gamma (x)\Gamma (y)}{\Gamma (x+y)}%
=\dint\limits_{0}^{1}t^{x-1}\left( 1-t\right) ^{y-1}dt,\ \ x,y>0,
\end{equation*}

(3) The hypergeometric function:%
\begin{equation*}
_{2}F_{1}\left( a,b;c;z\right) =\frac{1}{\beta \left( b,c-b\right) }%
\dint\limits_{0}^{1}t^{b-1}\left( 1-t\right) ^{c-b-1}\left( 1-zt\right)
^{-a}dt,\ c>b>0,\ \left\vert z\right\vert <1\text{ (see \cite{KST06}).}
\end{equation*}

\begin{definition}[\protect\cite{I13e}]
Let $I\subset \left( 0,\infty \right) $ be an real interval. A function $%
f:I\rightarrow 
\mathbb{R}
$ is said to be harmonically $s$-convex (concave), if \ 
\begin{equation*}
f\left( \frac{xy}{tx+(1-t)y}\right) \leq \left( \geq \right)
t^{s}f(y)+(1-t)^{s}f(x)
\end{equation*}%
for all $x,y\in I$ , $t\in \lbrack 0,1]$ and for some fixed $s\in \left( 0,1%
\right] $.
\end{definition}

The following identity is proved by Iscan (see \cite{I13e}).

\begin{lemma}
\label{1.5}Let $f:I\subset 
\mathbb{R}
\backslash \left\{ 0\right\} \rightarrow 
\mathbb{R}
$ be a differentiable function on $I^{\circ }$ and $a,b\in I$ with $a<b$. If 
$f^{\prime }\in L[a,b]$ then for all $x\in \left[ a,b\right] $ we have 
\begin{eqnarray}
&&f(x)-\frac{ab}{b-a}\dint\limits_{a}^{b}\frac{f(u)}{u^{2}}du  \label{1-5} \\
&=&\frac{ab}{b-a}\left\{ \left( x-a\right) ^{2}\dint\limits_{0}^{1}\frac{t}{%
\left( ta+(1-t)x\right) ^{2}}f^{\prime }\left( \frac{ax}{ta+(1-t)x}\right)
dt\right.   \notag \\
&&-\left. \left( b-x\right) ^{2}\dint\limits_{0}^{1}\frac{t}{\left(
tb+(1-t)x\right) ^{2}}f^{\prime }\left( \frac{bx}{tb+(1-t)x}\right)
dt\right\}   \notag
\end{eqnarray}
\end{lemma}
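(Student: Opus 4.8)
The plan is to work with the two integrals on the right-hand side separately; write
\[
I_1=\int_0^1\frac{t}{(ta+(1-t)x)^2}\,f'\!\left(\frac{ax}{ta+(1-t)x}\right)dt,\qquad I_2=\int_0^1\frac{t}{(tb+(1-t)x)^2}\,f'\!\left(\frac{bx}{tb+(1-t)x}\right)dt,
\]
and fix $x\in(a,b)$ (the cases $x=a$, $x=b$ will be trivial, since one of the two square factors vanishes, and can be disposed of by the same argument applied to a single integral). For each of $I_1,I_2$ the idea is to recognize the integrand as a constant multiple of $t$ times a total $t$-derivative, integrate by parts, and then pass to the variable $u=\frac{ax}{ta+(1-t)x}$ (respectively $u=\frac{bx}{tb+(1-t)x}$) to recover an integral of $f(u)/u^2$. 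No earlier result from the paper is needed; this is a self-contained computation.

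For $I_1$, differentiating the composition gives
\[
\frac{d}{dt}\,f\!\left(\frac{ax}{ta+(1-t)x}\right)=\frac{ax(x-a)}{(ta+(1-t)x)^2}\,f'\!\left(\frac{ax}{ta+(1-t)x}\right),
\]
so the integrand of $I_1$ equals $\dfrac{t}{ax(x-a)}$ times this derivative. Integration by parts on $[0,1]$ (the boundary contribution is $0$ at $t=0$ and $f(x)$ at $t=1$) yields
\[
I_1=\frac{1}{ax(x-a)}\left\{f(x)-\int_0^1 f\!\left(\frac{ax}{ta+(1-t)x}\right)dt\right\}.
\]
In the remaining integral substitute $u=\frac{ax}{ta+(1-t)x}$; then $ta+(1-t)x=ax/u$, hence $dt=\frac{ax}{(x-a)u^2}\,du$, and $t:0\to1$ corresponds to $u:a\to x$, so that integral becomes $\frac{ax}{x-a}\int_a^x\frac{f(u)}{u^2}\,du$. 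Multiplying by $(x-a)^2$ gives $(x-a)^2 I_1=\frac{x-a}{ax}\,f(x)-\int_a^x\frac{f(u)}{u^2}\,du$.

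Running the identical computation for $I_2$ — now with $u=\frac{bx}{tb+(1-t)x}$, where $t:0\to1$ corresponds to $u:b\to x$ (a reversal of orientation) — produces $(b-x)^2 I_2=\frac{x-b}{bx}\,f(x)+\int_x^b\frac{f(u)}{u^2}\,du$. Subtracting, the two fragmentary integrals assemble into $\int_a^b\frac{f(u)}{u^2}\,du$, while the $f(x)$-coefficients collapse through the elementary identity $\frac{x-a}{ax}-\frac{x-b}{bx}=\frac{b-a}{ab}$; thus $(x-a)^2 I_1-(b-x)^2 I_2=\frac{b-a}{ab}f(x)-\int_a^b\frac{f(u)}{u^2}\,du$, and multiplying by $\frac{ab}{b-a}$ gives exactly \eqref{1-5}.

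The only genuinely delicate point is the sign/orientation bookkeeping: one must track carefully that $u$ runs from $b$ \emph{down} to $x$ in the second substitution and keep the signs of $x-a$ and $x-b$ straight, so that the combination yields $\int_a^b$ with the correct sign and the coefficient $\frac{b-a}{ab}$, rather than an incorrect expression such as $\int_x^b-\int_a^x$. Everything else is routine differentiation, integration by parts, and a change of variables.
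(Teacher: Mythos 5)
Your proof is correct, and it follows essentially the same route as the paper: the paper establishes this identity (via its generalization, Lemma \ref{2.1}) by exactly the same integration by parts on each of the two integrals followed by the substitution $u=ax/(ta+(1-t)x)$ (resp.\ $u=bx/(tb+(1-t)x)$), then multiplying by the appropriate factors and adding; your computation is the $\alpha=1$ case of that argument with the sign and orientation bookkeeping done correctly.
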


Using Lemma \ref{1.5}, In \cite{I13e}, Iscan established the following
results which hold for harmonically $s$-convex functions.

\begin{theorem}
\label{1.6}Let $f:I\subset \left( 0,\infty \right) \rightarrow 
\mathbb{R}
$ be a differentiable function on $I^{\circ }$, $a,b\in I$ with $a<b,$ and $%
f^{\prime }\in L[a,b].$ If $\left\vert f^{\prime }\right\vert ^{q}$ is
harmonically $s$- convex on $[a,b]$ for $q\geq 1,$ then for all $x\in \left[
a,b\right] $, we have%
\begin{equation}
\left\vert f(x)-\frac{ab}{b-a}\dint\limits_{a}^{b}\frac{f(u)}{u^{2}}%
du\right\vert  \label{1-6}
\end{equation}%
\begin{eqnarray*}
&\leq &\frac{ab}{b-a}\left\{ \left( x-a\right) ^{2}\left( \lambda
_{1}(a,x,s,q,q)\left\vert f^{\prime }\left( x\right) \right\vert
^{q}+\lambda _{2}((a,x,s,q,q)\left\vert f^{\prime }\left( a\right)
\right\vert ^{q}\right) ^{\frac{1}{q}}\right. \\
&&+\left. \left( b-x\right) ^{2}\left( \lambda _{3}(b,x,s,q,q)\left\vert
f^{\prime }\left( x\right) \right\vert ^{q}+\lambda
_{4}(b,x,s,q,q)\left\vert f^{\prime }\left( b\right) \right\vert ^{q}\right)
^{\frac{1}{q}}\right\} ,
\end{eqnarray*}%
where 
\begin{equation*}
\lambda _{1}(a,x,s,\vartheta ,\rho )=\frac{\beta \left( \rho +s+1,1\right) }{%
x^{2\vartheta }}._{2}F_{1}\left( 2\vartheta ,\rho +s+1;\rho +s+2;1-\frac{a}{x%
}\right) ,
\end{equation*}%
\begin{equation*}
\lambda _{2}(a,x,s,\vartheta ,\rho )=\frac{\beta \left( \rho +1,1\right) }{%
x^{2\vartheta }}._{2}F_{1}\left( 2\vartheta ,\rho +1;\rho +s+2;1-\frac{a}{x}%
\right) ,
\end{equation*}%
\begin{equation*}
\lambda _{3}(b,x,s,\vartheta ,\rho )=\frac{\beta \left( 1,\rho +s+1\right) }{%
b^{2\vartheta }}._{2}F_{1}\left( 2\vartheta ,1;\rho +s+2;1-\frac{x}{b}%
\right) ,
\end{equation*}%
\begin{equation*}
\lambda _{4}(b,x,s,\vartheta ,\rho )=\frac{\beta \left( s+1,\rho +1\right) }{%
b^{2\vartheta }}._{2}F_{1}\left( 2\vartheta ,s+1;\rho +s+2;1-\frac{x}{b}%
\right) ,
\end{equation*}
\end{theorem}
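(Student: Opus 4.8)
The plan is to deduce \eqref{1-6} from the integral identity of Lemma~\ref{1.5} by the familiar chain: triangle inequality $\to$ power-mean (Hölder) inequality $\to$ the definition of harmonic $s$-convexity $\to$ closed-form evaluation of the elementary integrals that survive. Taking absolute values in \eqref{1-5} and using the triangle inequality gives
\[
\left|f(x)-\frac{ab}{b-a}\int_a^b\frac{f(u)}{u^2}\,du\right|\le\frac{ab}{b-a}\Big\{(x-a)^2 I_1+(b-x)^2 I_2\Big\},
\]
where $I_1=\int_0^1\frac{t}{(ta+(1-t)x)^2}\big|f'\big(\frac{ax}{ta+(1-t)x}\big)\big|\,dt$ and $I_2=\int_0^1\frac{t}{(tb+(1-t)x)^2}\big|f'\big(\frac{bx}{tb+(1-t)x}\big)\big|\,dt$.

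For $q\ge1$ I would apply the power-mean inequality on $[0,1]$ (equivalently, $\|h\|_{L^1[0,1]}\le\|h\|_{L^q[0,1]}$, or Hölder with exponents $\tfrac{q}{q-1}$ and $q$ after inserting the factor $1$), which gives
\[
I_1\le\left(\int_0^1\frac{t^{q}}{(ta+(1-t)x)^{2q}}\Big|f'\Big(\frac{ax}{ta+(1-t)x}\Big)\Big|^{q}\,dt\right)^{1/q},
\]
and the analogous bound for $I_2$ (both are equalities when $q=1$). Since $\frac{ax}{ta+(1-t)x}$ and $\frac{bx}{tb+(1-t)x}$ are exactly the points $\frac{XY}{tX+(1-t)Y}$ with $(X,Y)=(a,x)$ and $(X,Y)=(b,x)$, the harmonic $s$-convexity of $|f'|^q$ yields $\big|f'(\frac{ax}{ta+(1-t)x})\big|^q\le t^{s}|f'(x)|^q+(1-t)^{s}|f'(a)|^q$ and $\big|f'(\frac{bx}{tb+(1-t)x})\big|^q\le t^{s}|f'(x)|^q+(1-t)^{s}|f'(b)|^q$. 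Substituting these splits each $q$-th power above into a term carrying $|f'(x)|^q$ and one carrying $|f'(a)|^q$ (respectively $|f'(b)|^q$), so that the problem reduces to evaluating the four integrals $\int_0^1 t^{q+s}(ta+(1-t)x)^{-2q}dt$, $\int_0^1 t^{q}(1-t)^{s}(ta+(1-t)x)^{-2q}dt$, and the two with $b$ in place of $a$.

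These evaluations are the only non-routine part. For the two integrals over $[a,x]$ I would write $ta+(1-t)x=x\big(1-(1-\tfrac{a}{x})t\big)$ with $0\le1-\tfrac{a}{x}<1$ and compare with the integral representation of the hypergeometric function recalled above; this identifies them with the appropriate multiples of ${}_2F_1\big(2q,\cdot\,;\cdot\,;1-\tfrac{a}{x}\big)$, i.e.\ with $\lambda_1(a,x,s,q,q)$ and $\lambda_2(a,x,s,q,q)$. For the two integrals over $[x,b]$ the substitution $u=1-t$ is convenient: it turns $tb+(1-t)x$ into $b\big(1-(1-\tfrac{x}{b})u\big)$ with $0\le1-\tfrac{x}{b}<1$, after which the same identification produces $\lambda_3(b,x,s,q,q)$ and $\lambda_4(b,x,s,q,q)$. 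Collecting terms, taking $q$-th roots and inserting back into the first display gives the asserted inequality.

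The main obstacle is precisely this closed-form step: one must recognise each elementary integral as a Gauss hypergeometric function via a linear change of variable inside the denominator, keep track of which endpoint is the ``smaller'' one — which is why the integrals over $[x,b]$ need the reflection $u=1-t$, so that the hypergeometric argument stays in $[0,1)$ and the Euler representation is legitimate — and match the Beta-function normalisations. The remaining ingredients (Lemma~\ref{1.5}, the triangle inequality, the power-mean/Hölder inequality, and the definition of harmonic $s$-convexity) enter in the obvious way, and the degenerate cases $x=a$ or $x=b$ need no separate treatment since the coefficient $(x-a)^2$ or $(b-x)^2$ then vanishes.
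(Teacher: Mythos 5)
Your proposal is correct and matches the paper's own argument: the paper proves the fractional generalization (Theorem \ref{2.2}) by exactly your chain --- the identity of Lemma \ref{1.5} (resp.\ its fractional analogue), the triangle inequality, the power-mean/H\"older step with the factor $\left(\int_0^1 1\,dt\right)^{1-1/q}$, harmonic $s$-convexity of $|f'|^q$, and Euler's integral representation of $_2F_1$ to evaluate the four surviving integrals --- and Theorem \ref{1.6} is the case $\alpha=1$. Your identifications of the Beta-function normalizations are the correct ones (note they show the factor $\beta(\rho+1,1)$ in the statement's $\lambda_2$ should read $\beta(\rho+1,s+1)$, as it does in Theorem \ref{2.2}).
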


\begin{theorem}
\label{1.7}Let $f:I\subset \left( 0,\infty \right) \rightarrow 
\mathbb{R}
$ be a differentiable function on $I^{\circ }$, $a,b\in I$ with $a<b,$ and $%
f^{\prime }\in L[a,b].$ If $\left\vert f^{\prime }\right\vert ^{q}$ is
harmonically $s$- convex on $[a,b]$ for $q\geq 1,$ then for all $x\in \left[
a,b\right] $, we have%
\begin{equation}
\left\vert f(x)-\frac{ab}{b-a}\dint\limits_{a}^{b}\frac{f(u)}{u^{2}}%
du\right\vert  \label{1-7}
\end{equation}%
\begin{eqnarray*}
&\leq &\frac{ab}{b-a}\left( \frac{1}{2}\right) ^{1-\frac{1}{q}}\left\{
\left( x-a\right) ^{2}\left( \lambda _{1}(a,x,s,q,1)\left\vert f^{\prime
}\left( x\right) \right\vert ^{q}+\lambda _{2}(a,x,s,q,1)\left\vert
f^{\prime }\left( a\right) \right\vert ^{q}\right) ^{\frac{1}{q}}\right. \\
&&+\left. \left( b-x\right) ^{2}\left( \lambda _{3}(b,x,s,q,1)\left\vert
f^{\prime }\left( x\right) \right\vert ^{q}+\lambda
_{4}(b,x,s,q,1)\left\vert f^{\prime }\left( b\right) \right\vert ^{q}\right)
^{\frac{1}{q}}\right\}
\end{eqnarray*}%
where $\lambda _{1}$, $\lambda _{2}$, $\lambda _{3}$ and $\lambda _{4}$ are
defined as in Theorem \ref{1.6}.
\end{theorem}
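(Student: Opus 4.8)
The plan is to start from the identity in Lemma \ref{1.5}, which expresses $f(x)-\frac{ab}{b-a}\int_a^b\frac{f(u)}{u^2}du$ as $\frac{ab}{b-a}$ times a difference of two integrals over $[0,1]$. Taking absolute values and applying the triangle inequality immediately reduces the problem to bounding the two integrals
\[
I_1=\int_0^1\frac{t}{(ta+(1-t)x)^2}\left|f'\!\left(\frac{ax}{ta+(1-t)x}\right)\right|dt,\qquad I_2=\int_0^1\frac{t}{(tb+(1-t)x)^2}\left|f'\!\left(\frac{bx}{tb+(1-t)x}\right)\right|dt.
\]

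For $I_1$, I would write $t=t^{1-1/q}\cdot t^{1/q}$ (more precisely factor the integrand appropriately) and apply the power-mean inequality (Hölder with exponents $q$ and $q/(q-1)$, using that $q\ge 1$), pulling out a factor $\left(\int_0^1\frac{t}{(ta+(1-t)x)^2}dt\right)^{1-1/q}$. Here is the one place the two theorems diverge: in Theorem \ref{1.6} one keeps the full weight $\frac{t}{(ta+(1-t)x)^2}$ inside the remaining $q$-th power integral, which is why the parameter $\rho=q$ appears in $\lambda_i(a,x,s,q,q)$; in Theorem \ref{1.7} one instead bounds $\frac{1}{(ta+(1-t)x)^2}\le \frac{1}{a^2}$... no — rather, one splits so that the leftover power-mean factor is $\left(\int_0^1 t\,dt\right)^{1-1/q}=(1/2)^{1-1/q}$ (hence the prefactor $(1/2)^{1-1/q}$ and the parameter $\rho=1$ in $\lambda_i(a,x,s,q,1)$), keeping only the single power of $t$ outside and the weight $\frac{t}{(ta+(1-t)x)^2}$... again inside with $\rho=1$. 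So the structural difference between the two proofs is purely the choice of how much of the weight goes into the Hölder split; the rest is identical and I would present Theorem \ref{1.7} by the same template as Theorem \ref{1.6}.

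After the Hölder step, I apply the harmonic $s$-convexity of $|f'|^q$: since $\frac{ax}{ta+(1-t)x}=\frac{ax}{ta+(1-t)x}$ is exactly the harmonic combination of $a$ and $x$ with parameter $t$, we get $\left|f'\!\left(\frac{ax}{ta+(1-t)x}\right)\right|^q\le t^s|f'(x)|^q+(1-t)^s|f'(a)|^q$. Substituting this into the remaining integral splits it into two pieces whose coefficients are, respectively, $\int_0^1\frac{t^{\rho+s+1}}{(ta+(1-t)x)^{2}}\,\frac{dt}{(\cdot)}$-type and $\int_0^1\frac{t^{\rho+1}(1-t)^s}{(ta+(1-t)x)^2}dt$-type expressions (after the appropriate normalization by the leftover power-mean factor). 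Analogously $I_2$ produces the integrals giving $\lambda_3,\lambda_4$. The final computational task is to recognize each of these integrals as a Beta-function times a Gauss hypergeometric ${}_2F_1$: writing $(ta+(1-t)x)^{-2\vartheta}=x^{-2\vartheta}\bigl(1-(1-\tfrac{a}{x})t\bigr)^{-2\vartheta}$ and matching against the Euler integral representation $_2F_1(a,b;c;z)=\frac{1}{\beta(b,c-b)}\int_0^1 t^{b-1}(1-t)^{c-b-1}(1-zt)^{-a}dt$ yields precisely the $\lambda_i(a,x,s,q,1)$ and $\lambda_i(b,x,s,q,1)$ as defined in Theorem \ref{1.6}.

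The main obstacle — though it is bookkeeping rather than genuine difficulty — is keeping the four integral evaluations straight and verifying that the Beta and hypergeometric parameters line up correctly with the definitions of $\lambda_1,\dots,\lambda_4$ at $\rho=1$; in particular one must be careful that in the $b$-integral the roles of the two convexity weights are swapped (the term with $|f'(x)|^q$ carries weight $t^s$ in one endpoint integral but $(1-t)^s$... no, the harmonic combination $\frac{bx}{tb+(1-t)x}$ has $t$ attached to $b=$ the far endpoint, so it is $|f'(x)|^q$ that gets $(1-t)^s$ — this is exactly why $\lambda_3$ has $\beta(1,\rho+s+1)$ while $\lambda_1$ has $\beta(\rho+s+1,1)$). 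Since $q\ge1$ guarantees $1-1/q\ge0$ so the power-mean step is legitimate and $\int_0^1 t\,dt=1/2$, nothing further is needed; the case $q=1$ simply collapses the prefactor to $1$ and recovers the $\rho=1$ specialization directly from Lemma \ref{1.5} without invoking Hölder.
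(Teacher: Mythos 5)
Your overall route is the same as the paper's: the paper proves the fractional generalization (Theorem \ref{2.3}) from Lemma \ref{2.1} by exactly the power-mean split you eventually settle on --- the weight $t^{\alpha}$ pulled out as $\left( \int_{0}^{1}t^{\alpha }dt\right) ^{1-\frac{1}{q}}$ and the factor $\left( ta+(1-t)x\right) ^{-2}$ raised to the $q$-th power inside the remaining integral --- and then observes that $\alpha =1$ recovers (\ref{1-7}). Your identification of the only structural difference from Theorem \ref{1.6} (namely, which part of the integrand serves as the power-mean weight, hence the prefactor $\left( \frac{1}{2}\right) ^{1-\frac{1}{q}}$ and $\rho =1$ instead of $\rho =q$) is exactly right.

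The one concrete error is in your parenthetical about the $b$-integral. The convexity weights are \emph{not} swapped there: with the definition $f\left( \frac{xy}{tx+(1-t)y}\right) \leq t^{s}f(y)+(1-t)^{s}f(x)$, the argument $\frac{bx}{tb+(1-t)x}$ matches with first slot $b$ and second slot $x$, so the bound is $t^{s}\left\vert f^{\prime }(x)\right\vert ^{q}+(1-t)^{s}\left\vert f^{\prime }(b)\right\vert ^{q}$; that is, $\left\vert f^{\prime }(x)\right\vert ^{q}$ carries $t^{s}$ in \emph{both} endpoint integrals, exactly as in the displayed computation in the paper's proof of Theorem \ref{2.2}. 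Following your stated assignment would attach $\lambda _{3}$ and $\lambda _{4}$ to the wrong derivative values. The true source of the asymmetry $\beta \left( 1,\rho +s+1\right) $ versus $\beta \left( \rho +s+1,1\right) $ is the reduction to Euler's integral representation: in the $a$-integral one writes $\left( ta+(1-t)x\right) ^{-2\vartheta }=x^{-2\vartheta }\left( 1-\left( 1-\frac{a}{x}\right) t\right) ^{-2\vartheta }$ directly, while in the $b$-integral one writes $\left( tb+(1-t)x\right) ^{-2\vartheta }=b^{-2\vartheta }\left( 1-\left( 1-\frac{x}{b}\right) (1-t)\right) ^{-2\vartheta }$ and substitutes $u=1-t$, which interchanges the roles of the two Beta parameters. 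With that correction your argument goes through as outlined.
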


\begin{theorem}
\label{1.8}Let $f:I\subset \left( 0,\infty \right) \rightarrow 
\mathbb{R}
$ be a differentiable function on $I^{\circ }$, $a,b\in I$ with $a<b,$ and $%
f^{\prime }\in L[a,b].$ If $\left\vert f^{\prime }\right\vert ^{q}$ is
harmonically $s$- convex on $[a,b]$ for $q\geq 1,$ then for all $x\in \left[
a,b\right] $, we have%
\begin{equation}
\left\vert f(x)-\frac{ab}{b-a}\dint\limits_{a}^{b}\frac{f(u)}{u^{2}}%
du\right\vert  \label{1-8}
\end{equation}%
\begin{eqnarray*}
&\leq &\frac{ab}{b-a}\left\{ \lambda _{5}^{1-\frac{1}{q}}(a,x)\left(
x-a\right) ^{2}\left( \lambda _{1}(a,x,s,1,1)\left\vert f^{\prime }\left(
x\right) \right\vert ^{q}+\lambda _{2}(a,x,s,1,1)\left\vert f^{\prime
}\left( a\right) \right\vert ^{q}\right) ^{\frac{1}{q}}\right. \\
&&+\left. \lambda _{5}^{1-\frac{1}{q}}(b,x)\left( b-x\right) ^{2}\left(
\lambda _{3}(b,x,s,1,1)\left\vert f^{\prime }\left( x\right) \right\vert
^{q}+\lambda _{4}(b,x,s,1,1)\left\vert f^{\prime }\left( b\right)
\right\vert ^{q}\right) ^{\frac{1}{q}}\right\}
\end{eqnarray*}%
where 
\begin{equation*}
\lambda _{5}(\theta ,x)=\frac{1}{x-\theta }\left\{ \frac{1}{\theta }-\frac{%
\ln x-\ln \theta }{x-\theta }\right\} ,
\end{equation*}%
and $\lambda _{1}$, $\lambda _{2}$, $\lambda _{3}$ and $\lambda _{4}$ are
defined as in Theorem \ref{1.6}.
\end{theorem}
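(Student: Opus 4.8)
The plan is to apply the identity of Lemma~\ref{1.5} and then estimate the two integrals appearing on its right-hand side, using the power-mean (H\"older) inequality in the form $\left(\int_0^1 |g|\,|h|\,dt\right)\le \left(\int_0^1 |g|\,dt\right)^{1-1/q}\left(\int_0^1 |g|\,|h|^q\,dt\right)^{1/q}$ with the weight $g(t)=t/(ta+(1-t)x)^2$ for the first integral and $g(t)=t/(tb+(1-t)x)^2$ for the second. The exponent $1-1/q$ collects the factor $\lambda_5^{1-1/q}(a,x)$ and $\lambda_5^{1-1/q}(b,x)$, so the first step is to verify that
\begin{equation*}
\int_0^1 \frac{t}{(ta+(1-t)x)^2}\,dt=\lambda_5(a,x),
\end{equation*}
which is a routine elementary integral (substitute $u=ta+(1-t)x$, or integrate directly), and likewise with $a$ replaced by $b$.

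Next I would bring in the hypothesis that $|f'|^q$ is harmonically $s$-convex. Since the arguments $ax/(ta+(1-t)x)$ and $bx/(tb+(1-t)x)$ are exactly of the harmonic-mean form $\dfrac{x\cdot a}{tx+(1-t)a}$ type (up to swapping the roles of $t$ and $1-t$), harmonic $s$-convexity gives pointwise bounds
\begin{equation*}
\left|f'\!\left(\frac{ax}{ta+(1-t)x}\right)\right|^q\le (1-t)^s|f'(x)|^q+t^s|f'(a)|^q,
\end{equation*}
and similarly for the $b$-term. Substituting these into the $|h|^q$ factor, the inner integral splits into two pieces of the form $\int_0^1 \frac{t\,(1-t)^s}{(ta+(1-t)x)^{2}}\,dt$ and $\int_0^1 \frac{t^{s+1}}{(ta+(1-t)x)^{2}}\,dt$ (and the analogous $b$-integrals). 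The task is then to recognise these as the functions $\lambda_1,\lambda_2$ (resp.\ $\lambda_3,\lambda_4$) evaluated at the parameters $(\vartheta,\rho)=(1,1)$. This is where the hypergeometric representation enters: writing $ta+(1-t)x = x\bigl(1-(1-\tfrac{a}{x})t\bigr)$ one factors out $x^{-2}$ and matches the remaining $\int_0^1 t^{\rho+s}(1-t)^{0}\bigl(1-(1-\tfrac{a}{x})t\bigr)^{-2}\,dt$ against the Euler-type integral defining ${}_2F_1$, using $\beta(\rho+s+1,1)$ as the normalising Beta factor; the $\lambda_2$-integral comes out the same way with $t^{\rho}$ in place of $t^{\rho+s}$. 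For the $b$-integrals one instead writes $tb+(1-t)x=b\bigl(1-(1-\tfrac{x}{b})(1-t)\bigr)$ and changes variable $t\mapsto 1-t$, which is why $\lambda_3,\lambda_4$ carry the Beta factors $\beta(1,\rho+s+1)$ and $\beta(s+1,\rho+1)$.

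Assembling the pieces: apply power-mean to each of the two integrals in \eqref{1-5}, take absolute values, use $\left|f(x)-\frac{ab}{b-a}\int_a^b \frac{f(u)}{u^2}du\right|\le \frac{ab}{b-a}\bigl\{(x-a)^2 I_1 + (b-x)^2 I_2\bigr\}$ where $I_1\le \lambda_5^{1-1/q}(a,x)\bigl(\lambda_1(a,x,s,1,1)|f'(x)|^q+\lambda_2(a,x,s,1,1)|f'(a)|^q\bigr)^{1/q}$ and symmetrically for $I_2$, and the claimed bound \eqref{1-8} drops out. The only genuinely delicate point — the main obstacle — is the bookkeeping in matching the elementary integrals $\int_0^1 t^{\rho+s}(1-\mu t)^{-2}\,dt$ (with $\mu=1-a/x$ or $\mu = 1-x/b$ after the flip) to the precise form of ${}_2F_1(2\vartheta,\rho+s+1;\rho+s+2;\mu)$ with $\vartheta=1$; one must be careful that $|\mu|<1$ (which holds since $a,b,x$ are all positive with $a\le x\le b$) so the hypergeometric integral representation is legitimate, and that the exponents $\rho+s+1$, $\rho+s+2$, etc., line up exactly with the definitions of $\lambda_1,\dots,\lambda_4$. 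Everything else is a direct specialisation of the machinery already used to prove Theorems~\ref{1.6} and \ref{1.7}, with the only structural difference being that here the ``weight mass'' $\lambda_5$ is pulled out to the power $1-1/q$ rather than absorbed, giving a bound that is sometimes sharper for $q>1$.
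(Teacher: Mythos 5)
Your proposal follows essentially the same route as the paper: Theorem \ref{1.8} is the $\alpha =1$ case of Theorem \ref{2.4}, whose proof applies the power-mean inequality to the two integrals in the identity of Lemma \ref{1.5} (resp.\ Lemma \ref{2.1}) with exactly the weight $t^{\alpha }/(ta+(1-t)x)^{2}$ you choose, pulls out $\bigl( \int_{0}^{1}t^{\alpha }(ta+(1-t)x)^{-2}dt\bigr) ^{1-1/q}$ as the $\lambda _{5}^{1-1/q}$ factor, and evaluates the remaining integrals as Beta/hypergeometric expressions; your elementary evaluation $\int_{0}^{1}t(ta+(1-t)x)^{-2}dt=\lambda _{5}(a,x)$ is correct and agrees with the hypergeometric form $\tfrac{1}{2x^{2}}\,{}_{2}F_{1}(2,2;3;1-\tfrac{a}{x})$ used there.

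There is, however, one concrete error: your application of harmonic $s$-convexity is swapped. Taking the pair $(a,x)$ for $(x,y)$ in the definition, so that $tx+(1-t)y$ becomes $ta+(1-t)x$, gives
\begin{equation*}
\left\vert f^{\prime }\left( \frac{ax}{ta+(1-t)x}\right) \right\vert
^{q}\leq t^{s}\left\vert f^{\prime }(x)\right\vert ^{q}+(1-t)^{s}\left\vert
f^{\prime }(a)\right\vert ^{q},
\end{equation*}
i.e.\ $t^{s}$ must accompany $\left\vert f^{\prime }(x)\right\vert ^{q}$, not $(1-t)^{s}$ as you wrote (check $t=1$: the argument is then $x$, so the bound must reduce to $\left\vert f^{\prime }(x)\right\vert ^{q}$); the same remark applies to the $b$-term. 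With your version the coefficient of $\left\vert f^{\prime }(x)\right\vert ^{q}$ becomes $\int_{0}^{1}t(1-t)^{s}(ta+(1-t)x)^{-2}dt$, which is $\lambda _{2}(a,x,s,1,1)$ rather than $\lambda _{1}(a,x,s,1,1)$, so the roles of $\lambda _{1},\lambda _{2}$ (and of $\lambda _{3},\lambda _{4}$) end up interchanged and the stated bound does not drop out as claimed. Once this is corrected, your identification of the four integrals with $\lambda _{1},\dots ,\lambda _{4}$ at $(\vartheta ,\rho )=(1,1)$, including the $t\mapsto 1-t$ flip for the $b$-integrals, is right (your Beta factor $\beta (\rho +1,s+1)$ for $\lambda _{2}$ is in fact the correct one, as printed in Theorem \ref{2.2}; the $\beta (\rho +1,1)$ in Theorem \ref{1.6} is a typo), and the final assembly coincides with the paper's argument.
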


\begin{theorem}
\label{1.9}Let $f:I\subset \left( 0,\infty \right) \rightarrow 
\mathbb{R}
$ be a differentiable function on $I^{\circ }$, $a,b\in I$ with $a<b,$ and $%
f^{\prime }\in L[a,b].$ If $\left\vert f^{\prime }\right\vert ^{q}$ is
harmonically $s$-convex on $[a,b]$ for $q>1,\;\frac{1}{p}+\frac{1}{q}=1,$
then%
\begin{equation}
\left\vert f(x)-\frac{ab}{b-a}\dint\limits_{a}^{b}\frac{f(u)}{u^{2}}%
du\right\vert  \label{1-9}
\end{equation}%
\begin{eqnarray*}
&\leq &\frac{ab}{b-a}\left( \frac{1}{p+1}\right) ^{\frac{1}{p}}\left\{
\left( x-a\right) ^{2}\left( \lambda _{1}(a,x,s,q,0)\left\vert f^{\prime
}\left( x\right) \right\vert ^{q}+\lambda _{2}(a,x,s,q,0)\left\vert
f^{\prime }\left( a\right) \right\vert ^{q}\right) ^{\frac{1}{q}}\right. \\
&&+\left. \left( b-x\right) ^{2}\left( \lambda _{3}(b,x,s,q,0)\left\vert
f^{\prime }\left( x\right) \right\vert ^{q}+\lambda
_{4}(b,x,s,q,0)\left\vert f^{\prime }\left( b\right) \right\vert ^{q}\right)
^{\frac{1}{q}}\right\} .
\end{eqnarray*}%
where $\lambda _{1}$, $\lambda _{2}$, $\lambda _{3}$ and $\lambda _{4}$ are
defined as in Theorem \ref{1.6}.
\end{theorem}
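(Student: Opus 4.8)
The plan is to follow the same route as in Theorems \ref{1.6}--\ref{1.8}: start from the identity in Lemma \ref{1.5}, pass to absolute values, estimate the two resulting integrals, and then evaluate the elementary integrals that survive in terms of the hypergeometric function via the Euler-type integral representation recalled above. Taking moduli in (\ref{1-5}) and using the triangle inequality gives
\[
\left\vert f(x)-\frac{ab}{b-a}\int_{a}^{b}\frac{f(u)}{u^{2}}du\right\vert \leq \frac{ab}{b-a}\left\{ (x-a)^{2}J_{a}+(b-x)^{2}J_{b}\right\},
\]
where $J_{a}=\int_{0}^{1}\frac{t}{(ta+(1-t)x)^{2}}\bigl\vert f^{\prime}\bigl(\tfrac{ax}{ta+(1-t)x}\bigr)\bigr\vert\,dt$ and $J_{b}$ is the same integral with $b$ in place of $a$. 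Since $q>1$, I would estimate $J_{a}$ by Hölder's inequality written as $\int_{0}^{1}t\,g(t)\,dt\leq\bigl(\int_{0}^{1}t^{p}\,dt\bigr)^{1/p}\bigl(\int_{0}^{1}g(t)^{q}\,dt\bigr)^{1/q}$ with $g(t)=\frac{1}{(ta+(1-t)x)^{2}}\bigl\vert f^{\prime}\bigl(\tfrac{ax}{ta+(1-t)x}\bigr)\bigr\vert$; this produces the constant $\bigl(\int_{0}^{1}t^{p}\,dt\bigr)^{1/p}=\bigl(\tfrac{1}{p+1}\bigr)^{1/p}$ and leaves the factor $\bigl(\int_{0}^{1}(ta+(1-t)x)^{-2q}\bigl\vert f^{\prime}\bigl(\tfrac{ax}{ta+(1-t)x}\bigr)\bigr\vert^{q}\,dt\bigr)^{1/q}$. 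The treatment of $J_{b}$ is identical and produces the same constant, which is why $\bigl(\tfrac{1}{p+1}\bigr)^{1/p}$ can be pulled in front of the braces in (\ref{1-9}).

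Next, $\frac{ax}{ta+(1-t)x}=\frac{xa}{(1-t)x+ta}$ is of the form $\frac{xy}{\tau x+(1-\tau)y}$ with $y=a$ and $\tau=1-t$, so the harmonic $s$-convexity of $\vert f^{\prime}\vert^{q}$ gives $\bigl\vert f^{\prime}\bigl(\tfrac{ax}{ta+(1-t)x}\bigr)\bigr\vert^{q}\leq t^{s}\vert f^{\prime}(x)\vert^{q}+(1-t)^{s}\vert f^{\prime}(a)\vert^{q}$, and likewise $\bigl\vert f^{\prime}\bigl(\tfrac{bx}{tb+(1-t)x}\bigr)\bigr\vert^{q}\leq t^{s}\vert f^{\prime}(x)\vert^{q}+(1-t)^{s}\vert f^{\prime}(b)\vert^{q}$. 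Substituting these bounds reduces the estimate to the four elementary integrals $\int_{0}^{1}\frac{t^{s}}{(ta+(1-t)x)^{2q}}\,dt$, $\int_{0}^{1}\frac{(1-t)^{s}}{(ta+(1-t)x)^{2q}}\,dt$, $\int_{0}^{1}\frac{t^{s}}{(tb+(1-t)x)^{2q}}\,dt$ and $\int_{0}^{1}\frac{(1-t)^{s}}{(tb+(1-t)x)^{2q}}\,dt$. Writing $ta+(1-t)x=x\bigl(1-t(1-\tfrac{a}{x})\bigr)$ and $tb+(1-t)x=b\bigl(1-(1-t)(1-\tfrac{x}{b})\bigr)$, performing the change of variable $t\mapsto1-t$ in the two integrals involving $b$, and comparing with $\frac{1}{\beta(\nu,\sigma-\nu)}\int_{0}^{1}t^{\nu-1}(1-t)^{\sigma-\nu-1}(1-zt)^{-\mu}\,dt={}_{2}F_{1}(\mu,\nu;\sigma;z)$, one identifies these four integrals with $\lambda_{1}(a,x,s,q,0)$, $\lambda_{2}(a,x,s,q,0)$, $\lambda_{3}(b,x,s,q,0)$ and $\lambda_{4}(b,x,s,q,0)$ from Theorem \ref{1.6}. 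Collecting everything then yields precisely (\ref{1-9}).

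No step is genuinely difficult once Lemma \ref{1.5} is available: the Hölder step and the convexity step are routine, so the only real care is required in the last computation, namely correctly matching the three exponents in the Euler integral for ${}_{2}F_{1}$ against each of the four integrands and, in particular, carrying out the substitution $t\mapsto1-t$ for the $b$-integrals so that the argument of ${}_{2}F_{1}$ comes out as $1-\tfrac{x}{b}$ (and $1-\tfrac{a}{x}$ for the $a$-integrals), matching the $\lambda_{1},\dots,\lambda_{4}$ exactly as stated.
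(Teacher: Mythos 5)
Your proposal is correct and follows essentially the same route as the paper: the paper proves the fractional analogue (Theorem \ref{2.5}) from Lemma \ref{2.1} by exactly this Hölder split $t\cdot\frac{\vert f'\vert}{(ta+(1-t)x)^{2}}$ giving $\bigl(\tfrac{1}{\alpha p+1}\bigr)^{1/p}$, followed by harmonic $s$-convexity and the Euler-integral identification of the four integrals with $\lambda_{1},\dots,\lambda_{4}$, and the stated theorem is the case $\alpha=1$. Your verification of the exponent matching and of the substitution $t\mapsto 1-t$ for the $b$-integrals is exactly what the paper's computations (\ref{2-2b})--(\ref{2-2c}) encode.
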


\begin{theorem}
\label{1.10}Let $f:I\subset \left( 0,\infty \right) \rightarrow 
\mathbb{R}
$ be a differentiable function on $I^{\circ }$, $a,b\in I$ with $a<b,$ and $%
f^{\prime }\in L[a,b].$ If $\left\vert f^{\prime }\right\vert ^{q}$ is
harmonically $s$-convex on $[a,b]$ for $q>1,\;\frac{1}{p}+\frac{1}{q}=1,$
then%
\begin{equation}
\left\vert f(x)-\frac{ab}{b-a}\dint\limits_{a}^{b}\frac{f(u)}{u^{2}}%
du\right\vert  \label{1-10}
\end{equation}%
\begin{eqnarray*}
&\leq &\frac{ab}{b-a}\left\{ \left( \lambda _{1}(a,x,0,p,p)\right) ^{\frac{1%
}{p}}\left( x-a\right) ^{2}\left( \frac{\left\vert f^{\prime }\left(
x\right) \right\vert ^{q}+\left\vert f^{\prime }\left( a\right) \right\vert
^{q}}{s+1}\right) ^{\frac{1}{q}}\right. \\
&&+\left. \left( \lambda _{3}(b,x,0,p,p)\right) ^{\frac{1}{p}}\left(
b-x\right) ^{2}\left( \frac{\left\vert f^{\prime }\left( x\right)
\right\vert ^{q}+\left\vert f^{\prime }\left( b\right) \right\vert ^{q}}{s+1}%
\right) ^{\frac{1}{q}}\right\} .
\end{eqnarray*}%
where $\lambda _{1}$, $\lambda _{2}$, $\lambda _{3}$ and $\lambda _{4}$ are
defined as in Theorem \ref{1.6}.
\end{theorem}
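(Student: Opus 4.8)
The plan is to combine the identity of Lemma~\ref{1.5} with Hölder's inequality, using a splitting different from that of the previous theorems. Starting from \eqref{1-5}, I take absolute values and apply the triangle inequality to obtain
\begin{equation*}
\left\vert f(x)-\frac{ab}{b-a}\int_{a}^{b}\frac{f(u)}{u^{2}}du\right\vert \leq \frac{ab}{b-a}\left\{ \left( x-a\right) ^{2}I_{a}+\left( b-x\right) ^{2}I_{b}\right\} ,
\end{equation*}
where $I_{a}=\int_{0}^{1}\frac{t}{\left( ta+(1-t)x\right) ^{2}}\left\vert f^{\prime }\left( \frac{ax}{ta+(1-t)x}\right) \right\vert dt$ and $I_{b}$ is the analogous expression with $a$ replaced by $b$. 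To $I_{a}$ I apply Hölder's inequality with exponents $p,q$, pairing the factor $t\left( ta+(1-t)x\right) ^{-2}$ (raised to the power $p$) with $\left\vert f^{\prime }(\cdot )\right\vert $ (raised to the power $q$), which yields
\begin{equation*}
I_{a}\leq \left( \int_{0}^{1}\frac{t^{p}}{\left( ta+(1-t)x\right) ^{2p}}dt\right) ^{1/p}\left( \int_{0}^{1}\left\vert f^{\prime }\left( \frac{ax}{ta+(1-t)x}\right) \right\vert ^{q}dt\right) ^{1/q},
\end{equation*}
and the corresponding bound for $I_{b}$.

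The next step is to identify the two $p$-integrals with $\lambda _{1}$ and $\lambda _{3}$. Writing $\left( ta+(1-t)x\right) ^{2p}=x^{2p}\left( 1-\left( 1-\tfrac{a}{x}\right) t\right) ^{2p}$, using the integral representation of $_{2}F_{1}$ recalled above and $\beta \left( p+1,1\right) =\tfrac{1}{p+1}$, one checks that $\int_{0}^{1}t^{p}\left( ta+(1-t)x\right) ^{-2p}dt=\lambda _{1}(a,x,0,p,p)$; the same computation, after the substitution $t\mapsto 1-t$, gives $\int_{0}^{1}t^{p}\left( tb+(1-t)x\right) ^{-2p}dt=\lambda _{3}(b,x,0,p,p)$. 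For the remaining $q$-integrals I invoke the hypothesis that $\left\vert f^{\prime }\right\vert ^{q}$ is harmonically $s$-convex on $[a,b]$: since $\frac{ax}{ta+(1-t)x}$ is the point $\frac{uv}{tu+(1-t)v}$ with $u=a$, $v=x$, the definition gives $\left\vert f^{\prime }\left( \frac{ax}{ta+(1-t)x}\right) \right\vert ^{q}\leq t^{s}\left\vert f^{\prime }(x)\right\vert ^{q}+(1-t)^{s}\left\vert f^{\prime }(a)\right\vert ^{q}$, and hence, using $\int_{0}^{1}t^{s}dt=\int_{0}^{1}(1-t)^{s}dt=\tfrac{1}{s+1}$, one gets $\int_{0}^{1}\left\vert f^{\prime }\left( \frac{ax}{ta+(1-t)x}\right) \right\vert ^{q}dt\leq \frac{\left\vert f^{\prime }(x)\right\vert ^{q}+\left\vert f^{\prime }(a)\right\vert ^{q}}{s+1}$; the corresponding estimate with $b$ holds for $I_{b}$. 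Substituting these estimates back into the triangle inequality above yields exactly \eqref{1-10}.

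The argument uses no ingredient beyond Lemma~\ref{1.5}, Hölder's inequality, and the definition of harmonic $s$-convexity, so the only point requiring genuine care is the identification of $\int_{0}^{1}t^{p}\left( ta+(1-t)x\right) ^{-2p}dt$ with $\lambda _{1}(a,x,0,p,p)$ and of $\int_{0}^{1}t^{p}\left( tb+(1-t)x\right) ^{-2p}dt$ with $\lambda _{3}(b,x,0,p,p)$; here one must keep track of the correct specialization $s=0$, $\vartheta =\rho =p$ of the five parameters of $\lambda _{1},\dots ,\lambda _{4}$. I expect this bookkeeping to be the main (and essentially the only) obstacle.
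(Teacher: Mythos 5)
Your proof is correct and takes essentially the same route as the paper: the paper establishes this bound (in its fractional form, Theorem \ref{2.6}) by exactly the Hölder splitting you use --- the whole kernel $t\left( ta+(1-t)x\right)^{-2}$ into the $p$-factor and $\left\vert f^{\prime}\right\vert^{q}$, estimated by harmonic $s$-convexity and $\int_{0}^{1}t^{s}dt=\tfrac{1}{s+1}$, into the $q$-factor --- and your identifications $\int_{0}^{1}t^{p}\left( ta+(1-t)x\right)^{-2p}dt=\lambda_{1}(a,x,0,p,p)$ and $\int_{0}^{1}t^{p}\left( tb+(1-t)x\right)^{-2p}dt=\lambda_{3}(b,x,0,p,p)$ check out against the integral representation of $_{2}F_{1}$.
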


We give some necessary definitions and mathematical preliminaries of
fractional calculus theory which are used throughout this paper.

\begin{definition}
Let $f\in L\left[ a,b\right] $. The Riemann-Liouville integrals $%
J_{a+}^{\alpha }f$ and $J_{b-}^{\alpha }f$ of oder $\alpha >0$ with $a\geq 0$
are defined by%
\begin{equation*}
J_{a+}^{\alpha }f(x)=\frac{1}{\Gamma (\alpha )}\dint\limits_{a}^{x}\left(
x-t\right) ^{\alpha -1}f(t)dt,\ x>a
\end{equation*}

and%
\begin{equation*}
J_{b-}^{\alpha }f(x)=\frac{1}{\Gamma (\alpha )}\dint\limits_{x}^{b}\left(
t-x\right) ^{\alpha -1}f(t)dt,\ x<b
\end{equation*}%
respectively, where $\Gamma (\alpha )$ is the Gamma function defined by $%
\Gamma (\alpha )=$ $\dint\limits_{0}^{\infty }e^{-t}t^{\alpha -1}dt$ and $%
J_{a^{+}}^{0}f(x)=J_{b^{-}}^{0}f(x)=f(x).$
\end{definition}

In the case of $\alpha =1$, the fractional integral reduces to the classical
integral. Because of the wide application of Hermite-Hadamard type
inequalities and fractional integrals, many researchers extend their studies
to Hermite-Hadamard type inequalities involving fractional integrals not
limited to integer integrals. Recently, more and more Hermite-Hadamard
inequalities involving fractional integrals have been obtained for different
classes of functions; see \cite{I13b,I13c,I13d,SSYB13,S12,WFZ12}.

In \cite{I13f}, Iscan gave Hermite--Hadamard's inequalities for Harmonically
convex functions in fractional integral forms as follows:

\begin{theorem}
Let $f:I\subseteq \left( 0,\infty \right) \rightarrow 
\mathbb{R}
$ be a function such that $f\in L[a,b]$, where $a,b\in I$ with $a<b$. If $f$
is a harmonically convex function on $[a,b]$, then the following
inequalities for fractional integrals hold:%
\begin{equation}
f\left( \frac{2ab}{a+b}\right) \leq \frac{\Gamma (\alpha +1)}{2}\left( \frac{%
ab}{b-a}\right) ^{\alpha }\left\{ J_{1/a-}^{\alpha }\left( f\circ g\right)
(1/b)+J_{1/b+}^{\alpha }\left( f\circ g\right) (1/a)\right\} \leq \frac{%
f(a)+f(b)}{2}  \label{2-0}
\end{equation}%
with $\alpha >0$, where $g(u)=1/u.$
\end{theorem}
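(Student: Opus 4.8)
The plan is to reduce the stated fractional Hermite–Hadamard inequality to the ordinary (non-fractional) case $\alpha=1$ already recorded in inequality \eqref{1-4}, by recognizing the Riemann–Liouville integrals of $f\circ g$ as ordinary integrals after the substitution $u=1/t$. First I would start from the harmonic convexity hypothesis written with the parametrization $x=\tfrac{ab}{ta+(1-t)b}$ (equivalently $\tfrac1x=\tfrac{t}{a}+\tfrac{1-t}{b}$) for $t\in[0,1]$, which sweeps $x$ over $[a,b]$. Applying \eqref{1-3} with the two symmetric choices of the weight, namely at $t$ and at $1-t$, and adding, gives for each $t\in[0,1]$
\begin{equation*}
2f\!\left(\frac{2ab}{a+b}\right)\le f\!\left(\frac{ab}{ta+(1-t)b}\right)+f\!\left(\frac{ab}{(1-t)a+tb}\right)\le f(a)+f(b),
\end{equation*}
where the left inequality uses harmonic convexity at the midpoint-type argument $\tfrac{2ab}{a+b}$ and the right inequality is the convexity combination. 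This is the same device used to prove \eqref{1-4}; the only new ingredient is to multiply through by the kernel weight $t^{\alpha-1}$ and integrate.

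Next I would multiply the double inequality by $t^{\alpha-1}$ and integrate over $t\in[0,1]$. The left side produces $\tfrac{2}{\alpha}f\!\left(\tfrac{2ab}{a+b}\right)$ and the right side produces $\tfrac{1}{\alpha}\big(f(a)+f(b)\big)$, so the whole matter is to identify the middle term
\begin{equation*}
\int_0^1 t^{\alpha-1}\left[f\!\left(\frac{ab}{ta+(1-t)b}\right)+f\!\left(\frac{ab}{(1-t)a+tb}\right)\right]dt
\end{equation*}
with the bracketed combination of Riemann–Liouville integrals in \eqref{2-0}. Writing $h(t)=(f\circ g)\!\left(\tfrac{t}{a}+\tfrac{1-t}{b}\right)$ is not quite it; rather I would change variables directly in the definition of $J_{1/b+}^{\alpha}(f\circ g)(1/a)$: with $g(u)=1/u$ and the substitution $u=\big(\tfrac{t}{a}+\tfrac{1-t}{b}\big)^{-1}$, one has $u-\tfrac1b=\tfrac{t(b-a)}{ab}\cdot(\text{something})$ after simplification, and the Jacobian together with the factor $(u-1/b)^{\alpha-1}$ collapses, up to the constant $\big(\tfrac{ab}{b-a}\big)^{\alpha}$ and $\Gamma(\alpha)$, to $t^{\alpha-1}\,dt$ times $f\!\left(\tfrac{ab}{ta+(1-t)b}\right)$. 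The symmetric substitution handles $J_{1/a-}^{\alpha}(f\circ g)(1/b)$ and yields the companion term with $t\leftrightarrow 1-t$. Assembling these, multiplying by $\tfrac{\Gamma(\alpha+1)}{2}\big(\tfrac{ab}{b-a}\big)^{\alpha}$, and using $\Gamma(\alpha+1)=\alpha\Gamma(\alpha)$ to absorb the $1/\alpha$ factors, turns the integrated double inequality into exactly \eqref{2-0}.

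The main obstacle is bookkeeping in the change of variables: one must verify that the affine map $t\mapsto \big(\tfrac{t}{a}+\tfrac{1-t}{b}\big)^{-1}$ sends $[0,1]$ onto $[a,b]$ monotonically, compute $du$ in terms of $dt$ carefully (there is an extra $u^{2}$-type factor from differentiating $1/u$, which is precisely what matches the $x^{2}$ in the denominators coming from $g'$), and confirm that the powers of $\tfrac{ab}{b-a}$ come out with exponent $\alpha$ rather than $\alpha-1$ or $\alpha+1$. It is also worth checking the limiting case $\alpha=1$ against \eqref{1-4} as a sanity test, and noting that no further regularity beyond $f\in L[a,b]$ is needed since harmonic convexity already guarantees the relevant measurability and the endpoint values $f(a),f(b)$ are finite. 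Once the identification of the middle term is nailed down, the inequality is immediate from the pointwise double inequality above.
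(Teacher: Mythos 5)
This theorem is quoted in the paper from reference \cite{I13f} and is not proved here, so there is no in-paper proof to compare against; judged on its own, your proposal is essentially the original argument for this result and is sound. The skeleton is right: the pointwise double inequality $2f\bigl(\tfrac{2ab}{a+b}\bigr)\le f\bigl(\tfrac{ab}{ta+(1-t)b}\bigr)+f\bigl(\tfrac{ab}{(1-t)a+tb}\bigr)\le f(a)+f(b)$ (the left from harmonic convexity with weight $\tfrac12$ applied to the two conjugate points, the right from adding the two $t\leftrightarrow 1-t$ instances of \eqref{1-3}), then multiplication by $t^{\alpha-1}$, integration, and identification of the middle term with the Riemann--Liouville integrals. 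One correction to your bookkeeping remarks: since $J_{1/b+}^{\alpha }\left( f\circ g\right)(1/a)$ is already an integral over the reciprocal variable $u\in[1/b,1/a]$, the right substitution is the \emph{affine} map $u=\tfrac{t}{a}+\tfrac{1-t}{b}$ (not its reciprocal), for which $u-\tfrac1b=t\,\tfrac{b-a}{ab}$ and $du=\tfrac{b-a}{ab}\,dt$; no $u^{2}$ Jacobian factor arises, and the kernel plus Jacobian give exactly $\bigl(\tfrac{b-a}{ab}\bigr)^{\alpha}t^{\alpha-1}dt$, whence the factor $\bigl(\tfrac{ab}{b-a}\bigr)^{\alpha}\Gamma(\alpha)$ and, after multiplying by $\tfrac{\alpha}{2}$, the constant $\tfrac{\Gamma(\alpha+1)}{2}\bigl(\tfrac{ab}{b-a}\bigr)^{\alpha}$ in \eqref{2-0}. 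With that fixed, the argument closes exactly as you describe.
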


In this paper, new Ostrowski\ type inequalities for harmonically $s$-convex
functions via Riemann--Liouville fractional integral. An interesting feature
of our results is that they provide new estimates on these types of
inequalities for fractional integrals and these results have some relation
with \cite{I13e} for $\alpha =1.$

\section{Main Results}

Let $f:I\subseteq \left( 0,\infty \right) \rightarrow 
\mathbb{R}
$ be a differentiable function on $I^{\circ }$, the interior of $I$,
throughout this section we will take%
\begin{eqnarray*}
&&S_{f}\left( g;\alpha ;x,a,b\right) \\
&=&\left[ \left( \frac{x-a}{ax}\right) ^{\alpha }+\left( \frac{b-x}{bx}%
\right) ^{\alpha }\right] f(x)-\Gamma (\alpha +1)\left\{ J_{1/x-}^{\alpha
}\left( f\circ g\right) (1/b)+J_{1/x+}^{\alpha }\left( f\circ g\right)
(1/a)\right\}
\end{eqnarray*}%
where $a,b\in I$ with $a<b$, $\alpha >0$, $x\in \left[ a,b\right] $, $%
g(u)=1/u$ and $\Gamma $ is Euler Gamma function.

\begin{lemma}
\label{2.1}Let $f:I\subset 
\mathbb{R}
\backslash \left\{ 0\right\} \rightarrow 
\mathbb{R}
$ be a differentiable function on $I^{\circ }$ and $a,b\in I$ with $a<b$. If 
$f^{\prime }\in L[a,b]$ then for all $x\in \left[ a,b\right] $ and $\alpha
>0 $ we have: 
\begin{eqnarray}
&&S_{f}\left( g;\alpha ;x,a,b\right)  \label{2-1} \\
&=&\frac{\left( x-a\right) ^{\alpha +1}}{\left( ax\right) ^{\alpha -1}}%
\dint\limits_{0}^{1}\frac{t^{\alpha }}{\left( ta+(1-t)x\right) ^{2}}%
f^{\prime }\left( \frac{ax}{ta+(1-t)x}\right) dt  \notag \\
&&-\frac{\left( b-x\right) ^{\alpha +1}}{\left( bx\right) ^{\alpha -1}}%
\dint\limits_{0}^{1}\frac{t^{\alpha }}{\left( tb+(1-t)x\right) ^{2}}%
f^{\prime }\left( \frac{bx}{tb+(1-t)x}\right) dt  \notag
\end{eqnarray}
\end{lemma}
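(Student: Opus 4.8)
The plan is to start from the right-hand side of \eqref{2-1}, denote its two integrals by $I_1$ and $I_2$, and process each one by a change of variable followed by an integration by parts, after which the surviving integral is recognised as a Riemann--Liouville fractional integral via the reciprocal substitution $s=1/u$.

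First I would treat $I_1=\int_0^1 \frac{t^\alpha}{(ta+(1-t)x)^2}f'\!\left(\frac{ax}{ta+(1-t)x}\right)dt$. The substitution $u=\frac{ax}{ta+(1-t)x}$, equivalently $t=\frac{x(u-a)}{u(x-a)}$, carries $t\in[0,1]$ to $u\in[a,x]$, and using $ta+(1-t)x=ax/u$ together with $dt=\frac{ax}{u^2(x-a)}\,du$ it turns $\frac{(x-a)^{\alpha+1}}{(ax)^{\alpha-1}}I_1$ into $\frac{1}{a^\alpha}\int_a^x\bigl(\frac{u-a}{u}\bigr)^\alpha f'(u)\,du$. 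Integrating by parts with $v=f$, and noting that $\alpha>0$ makes the boundary term at $u=a$ vanish, this equals $\bigl(\frac{x-a}{ax}\bigr)^\alpha f(x)-\frac{\alpha}{a^{\alpha-1}}\int_a^x\frac{(u-a)^{\alpha-1}}{u^{\alpha+1}}f(u)\,du$. Finally the substitution $s=1/u$ in the last integral, combined with the definition of $J_{1/x+}^\alpha$ and the fact that $(f\circ g)(s)=f(1/s)$, gives $\int_a^x\frac{(u-a)^{\alpha-1}}{u^{\alpha+1}}f(u)\,du=\Gamma(\alpha)a^{\alpha-1}J_{1/x+}^\alpha(f\circ g)(1/a)$, whence
\[
\frac{(x-a)^{\alpha+1}}{(ax)^{\alpha-1}}I_1=\left(\frac{x-a}{ax}\right)^\alpha f(x)-\Gamma(\alpha+1)J_{1/x+}^\alpha(f\circ g)(1/a).
\]

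The integral $I_2$ is handled in the same way via $u=\frac{bx}{tb+(1-t)x}$, equivalently $t=\frac{x(b-u)}{u(b-x)}$; here one must observe that this map is orientation-reversing, sending $t\in[0,1]$ to $u$ running from $b$ down to $x$, and track the resulting sign carefully. The analogous computation --- change of variable, integration by parts (the boundary term at $u=b$ vanishing since $\alpha>0$), then $s=1/u$ matched against the definition of $J_{1/x-}^\alpha$ --- yields
\[
-\frac{(b-x)^{\alpha+1}}{(bx)^{\alpha-1}}I_2=\left(\frac{b-x}{bx}\right)^\alpha f(x)-\Gamma(\alpha+1)J_{1/x-}^\alpha(f\circ g)(1/b).
\]
Adding the two displayed identities reproduces exactly $S_f(g;\alpha;x,a,b)$, which proves \eqref{2-1}. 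The only delicate points are the bookkeeping of orientation and signs in the $I_2$ substitution and ensuring that the reciprocal substitution is aligned with the correct one-sided fractional integral ($J_{1/x+}$ against the $a$-term, $J_{1/x-}$ against the $b$-term); the hypothesis $\alpha>0$ enters precisely in discarding the boundary contributions at $u=a$ and $u=b$ in the integrations by parts. As a sanity check, setting $\alpha=1$ collapses $S_f$ to the left-hand side of \eqref{1-5} and recovers Lemma \ref{1.5}.
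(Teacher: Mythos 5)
Your argument is correct and is essentially the paper's own proof: both hinge on an integration by parts whose boundary term yields $f(x)$ (the contribution at the other endpoint vanishing precisely because $\alpha>0$), followed by a reciprocal substitution that identifies the surviving integral as $\Gamma (\alpha +1)$ times the appropriate one-sided Riemann--Liouville integral, with the $a$-term matched to $J_{1/x+}^{\alpha }$ and the $b$-term to $J_{1/x-}^{\alpha }$. The only difference is the order of operations --- the paper integrates by parts directly in $t$, reading $\frac{ax(x-a)t^{\alpha }}{\left( ta+(1-t)x\right) ^{2}}f^{\prime }\left( \frac{ax}{ta+(1-t)x}\right) $ as $t^{\alpha }\frac{d}{dt}f\left( \frac{ax}{ta+(1-t)x}\right) $ and substituting afterwards, while you substitute $u$ first and integrate by parts in $u$ --- and your two displayed identities are exactly the paper's (\ref{2-1a}) and (\ref{2-1b}) after multiplication by $\left( \frac{x-a}{ax}\right) ^{\alpha }$ and $\left( \frac{b-x}{bx}\right) ^{\alpha }$ respectively.
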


\begin{proof}
By integration by parts, we can state%
\begin{eqnarray}
&&ax(x-a)\dint\limits_{0}^{1}\frac{t^{\alpha }}{\left( ta+(1-t)x\right) ^{2}}%
f^{\prime }\left( \frac{ax}{ta+(1-t)x}\right) dt  \notag \\
&=&\left. t^{\alpha }f\left( \frac{ax}{ta+(1-t)x}\right) \right\vert
_{0}^{1}-\alpha \dint\limits_{0}^{1}t^{\alpha -1}f\left( \frac{ax}{ta+(1-t)x}%
\right) dt  \notag \\
&=&f(x)-\alpha \left( \frac{ax}{x-a}\right) ^{\alpha
}\dint\limits_{1/x}^{1/a}\left( \frac{1}{a}-u\right) ^{\alpha -1}f(\frac{1}{u%
})du  \notag \\
&=&f(x)-\Gamma (\alpha +1)\left( \frac{ax}{x-a}\right) ^{\alpha
}J_{1/x+}^{\alpha }\left( f\circ g\right) (1/a)  \label{2-1a}
\end{eqnarray}%
and 
\begin{eqnarray}
&&-bx(b-x)\dint\limits_{0}^{1}\frac{t^{\alpha }}{\left( tb+(1-t)x\right) ^{2}%
}f^{\prime }\left( \frac{bx}{tb+(1-t)x}\right) dt  \notag \\
&=&\left. t^{\alpha }f\left( \frac{bx}{tb+(1-t)x}\right) \right\vert
_{0}^{1}-\alpha \dint\limits_{0}^{1}t^{\alpha -1}f\left( \frac{bx}{tb+(1-t)x}%
\right) dt  \notag \\
&=&f(x)-\alpha \left( \frac{bx}{b-x}\right) ^{\alpha
}\dint\limits_{1/b}^{1/x}\left( u-\frac{1}{b}\right) ^{\alpha -1}f(\frac{1}{u%
})du  \notag \\
&=&f(x)-\Gamma (\alpha +1)\left( \frac{bx}{b-x}\right) ^{\alpha
}J_{1/x-}^{\alpha }\left( f\circ g\right) (1/b)  \label{2-1b}
\end{eqnarray}%
Multiplying both sides of (\ref{2-1a}) and (\ref{2-1b}) by $\left( \frac{x-a%
}{ax}\right) ^{\alpha }$ and $\left( \frac{b-x}{bx}\right) ^{\alpha }$,
respectively and adding of results obtained, we get the desired results.
\end{proof}

\begin{remark}
\bigskip In Lemma \ref{2.1}, if we take $\alpha =1$, then the identity (\ref%
{2-1}) reduces the identity (\ref{1-5}) of Lemma \ref{1.5}.
\end{remark}

Using this lemma, we can obtain the following fractional integral
inequalities.

\begin{theorem}
\label{2.2}Let $f:I\subset \left( 0,\infty \right) \rightarrow 
\mathbb{R}
$ be a differentiable function on $I^{\circ }$, $a,b\in I$ with $a<b,$ and $%
f^{\prime }\in L[a,b].$ If $\left\vert f^{\prime }\right\vert ^{q}$ is
harmonically $s$- convex on $[a,b]$ for $q\geq 1,$ then for all $x\in \left[
a,b\right] $, we have%
\begin{equation}
\left\vert S_{f}\left( g;\alpha ;x,a,b\right) \right\vert  \label{2-2}
\end{equation}%
\begin{eqnarray*}
&\leq &\frac{\left( x-a\right) ^{\alpha +1}}{\left( ax\right) ^{\alpha -1}}%
\left\{ \left( \lambda _{1}(a,x,s,q,\alpha q)\left\vert f^{\prime }\left(
x\right) \right\vert ^{q}+\lambda _{2}((a,x,s,q,\alpha q)\left\vert
f^{\prime }\left( a\right) \right\vert ^{q}\right) ^{\frac{1}{q}}\right\} \\
&&+\frac{\left( b-x\right) ^{\alpha +1}}{\left( bx\right) ^{\alpha -1}}%
\left\{ \left( \lambda _{3}(b,x,s,q,\alpha q)\left\vert f^{\prime }\left(
x\right) \right\vert ^{q}+\lambda _{4}(b,x,s,q,\alpha q)\left\vert f^{\prime
}\left( b\right) \right\vert ^{q}\right) ^{\frac{1}{q}}\right\} ,
\end{eqnarray*}%
where 
\begin{equation*}
\lambda _{1}(a,x,s,\vartheta ,\rho )=\frac{\beta \left( \rho +s+1,1\right) }{%
x^{2\vartheta }}._{2}F_{1}\left( 2\vartheta ,\rho +s+1;\rho +s+2;1-\frac{a}{x%
}\right) ,
\end{equation*}%
\begin{equation*}
\lambda _{2}(a,x,s,\vartheta ,\rho )=\frac{\beta \left( \rho +1,s+1\right) }{%
x^{2\vartheta }}._{2}F_{1}\left( 2\vartheta ,\rho +1;\rho +s+2;1-\frac{a}{x}%
\right) ,
\end{equation*}%
\begin{equation*}
\lambda _{3}(b,x,s,\vartheta ,\rho )=\frac{\beta \left( 1,\rho +s+1\right) }{%
b^{2\vartheta }}._{2}F_{1}\left( 2\vartheta ,1;\rho +s+2;1-\frac{x}{b}%
\right) ,
\end{equation*}%
\begin{equation*}
\lambda _{4}(b,x,s,\vartheta ,\rho )=\frac{\beta \left( s+1,\rho +1\right) }{%
b^{2\vartheta }}._{2}F_{1}\left( 2\vartheta ,s+1;\rho +s+2;1-\frac{x}{b}%
\right) .
\end{equation*}
\end{theorem}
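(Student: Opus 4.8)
The plan is to start from the identity in Lemma~\ref{2.1}, take absolute values, and apply the triangle inequality to split $\left\vert S_{f}\left( g;\alpha ;x,a,b\right) \right\vert$ into two pieces, one involving the integral over $\left( ta+(1-t)x\right) ^{-2}\left\vert f^{\prime }\left( ax/(ta+(1-t)x)\right) \right\vert $ and the other the analogous expression with $b$ in place of $a$. Each piece carries the prefactor $\left( x-a\right) ^{\alpha +1}/\left( ax\right) ^{\alpha -1}$ or $\left( b-x\right) ^{\alpha +1}/\left( bx\right) ^{\alpha -1}$ respectively. The two pieces are handled symmetrically, so I would write out the argument in full only for the first one.

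For the first integral, the main tool is the power-mean (weighted Jensen) inequality with exponent $q\geq 1$: writing the integrand as $\frac{t^{\alpha}}{\left( ta+(1-t)x\right)^{2}}\cdot \left\vert f^{\prime}\left( \frac{ax}{ta+(1-t)x}\right) \right\vert$ and integrating over $[0,1]$, I would bound
\[
\int_{0}^{1}\frac{t^{\alpha}}{\left( ta+(1-t)x\right)^{2}}\left\vert f^{\prime}\right\vert \,dt \le \left( \int_{0}^{1}\frac{t^{\alpha}}{\left( ta+(1-t)x\right)^{2}}dt\right)^{1-\frac1q}\left( \int_{0}^{1}\frac{t^{\alpha}}{\left( ta+(1-t)x\right)^{2}}\left\vert f^{\prime}\right\vert^{q}dt\right)^{\frac1q}.
\]
Then I use the harmonic $s$-convexity of $\left\vert f^{\prime}\right\vert^{q}$: since $\frac{ax}{ta+(1-t)x}$ is exactly the point $\frac{ax}{ta+(1-t)x}$ arising from $x$ and $a$ with parameter $t$, harmonic $s$-convexity gives $\left\vert f^{\prime}\left( \frac{ax}{ta+(1-t)x}\right) \right\vert^{q}\le t^{s}\left\vert f^{\prime}(x)\right\vert^{q}+(1-t)^{s}\left\vert f^{\prime}(a)\right\vert^{q}$. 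Substituting this splits the $q$-integral into $\left\vert f^{\prime}(x)\right\vert^{q}\int_{0}^{1}\frac{t^{\alpha+s}}{\left( ta+(1-t)x\right)^{2}}dt$ plus $\left\vert f^{\prime}(a)\right\vert^{q}\int_{0}^{1}\frac{t^{\alpha}(1-t)^{s}}{\left( ta+(1-t)x\right)^{2}}dt$.

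The remaining work is the identification of these three elementary integrals with the $\lambda_{i}$ functions. I would show that an integral of the form $\int_{0}^{1}\frac{t^{\rho+\mu}(1-t)^{\nu}}{\left( ta+(1-t)x\right)^{2}}dt$ equals $\frac{1}{x^{2}}\int_{0}^{1}t^{\rho+\mu}(1-t)^{\nu}\left( 1-\bigl(1-\tfrac{a}{x}\bigr)t\right)^{-2}dt$, and then match it against the integral representation of $_{2}F_{1}$ given in the preliminaries (with the Beta-function normalization) — taking $\vartheta = q$ (so $2\vartheta$ plays the role of the exponent $2$ after the homogeneity rescaling produces $x^{2\vartheta}$ in the denominator... actually $\vartheta$ enters through the prefactor, so care is needed here), $\rho = \alpha q$, and the appropriate shift by $s$ or not. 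Concretely: $\int_0^1 \frac{t^{\alpha+s}}{(ta+(1-t)x)^2}dt$ yields $\lambda_1(a,x,s,q,\alpha q)$ (one checks $z = 1-a/x$ has $\left\vert z\right\vert<1$ since $0<a\le x$), and $\int_0^1 \frac{t^{\alpha}(1-t)^s}{(ta+(1-t)x)^2}dt$ yields $\lambda_2(a,x,s,q,\alpha q)$, while the ``power-mean weight'' integral $\int_0^1 \frac{t^{\alpha}}{(ta+(1-t)x)^2}dt$ must reproduce the factor needed so that the $1-\frac1q$ power combines correctly — this is the point I expect to be the main bookkeeping obstacle, since one has to verify that the product $\left( \text{weight integral}\right)^{1-1/q}\cdot\left( \lambda_1\left\vert f'(x)\right\vert^q+\lambda_2\left\vert f'(a)\right\vert^q\right)^{1/q}$ telescopes into exactly the stated right-hand side (in particular that the claimed form has \emph{no} leftover weight factor, which forces a specific relation among the hypergeometric parameters). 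After doing the same for the $b$-term with $\lambda_3,\lambda_4$, recombining with the prefactors $\left( x-a\right)^{\alpha+1}/(ax)^{\alpha-1}$ and $\left( b-x\right)^{\alpha+1}/(bx)^{\alpha-1}$ gives the claimed bound; setting $\alpha=1$ should recover Theorem~\ref{1.6} as a consistency check.
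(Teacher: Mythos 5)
Your overall skeleton (Lemma \ref{2.1}, triangle inequality, power mean, harmonic $s$-convexity of $\left\vert f^{\prime }\right\vert ^{q}$, then identification with $_{2}F_{1}$) is the right one, but the specific form of the power-mean inequality you chose does not produce the stated right-hand side, and the "bookkeeping obstacle" you flag is in fact a genuine obstruction to your version, not a detail to be checked. You apply the weighted Jensen inequality with weight $w(t)=t^{\alpha }/\left( ta+(1-t)x\right) ^{2}$, which gives
\begin{equation*}
\dint\limits_{0}^{1}w(t)\left\vert f^{\prime }\right\vert dt\leq \left(
\dint\limits_{0}^{1}w(t)dt\right) ^{1-\frac{1}{q}}\left(
\dint\limits_{0}^{1}w(t)\left\vert f^{\prime }\right\vert ^{q}dt\right) ^{
\frac{1}{q}}.
\end{equation*}
After inserting $s$-convexity, the $q$-integrals here are $\dint\nolimits_{0}^{1}t^{\alpha +s}\left( ta+(1-t)x\right) ^{-2}dt$ and $\dint\nolimits_{0}^{1}t^{\alpha }(1-t)^{s}\left( ta+(1-t)x\right) ^{-2}dt$, i.e.\ $\lambda _{1}(a,x,s,1,\alpha )$ and $\lambda _{2}(a,x,s,1,\alpha )$, and there remains the nontrivial weight factor $\left( \dint\nolimits_{0}^{1}w(t)dt\right) ^{1-1/q}$. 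That is precisely the bound of Theorem \ref{2.4} (where the leftover factors appear as $\lambda _{5}^{1-1/q}$ and $\lambda _{6}^{1-1/q}$), not the bound (\ref{2-2}). The coefficients in the statement are $\lambda _{i}(\cdot ,\cdot ,s,q,\alpha q)$, whose integral representations carry $t^{\alpha q}$ and $\left( ta+(1-t)x\right) ^{-2q}$ — the $q$-th power of the \emph{entire} kernel — and no such term can arise from your decomposition, in which only $\left\vert f^{\prime }\right\vert $ gets raised to the power $q$.

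The paper's proof instead applies the power-mean inequality with the trivial weight: $\dint\nolimits_{0}^{1}g(t)dt\leq \left( \dint\nolimits_{0}^{1}1dt\right) ^{1-1/q}\left( \dint\nolimits_{0}^{1}g(t)^{q}dt\right) ^{1/q}$ with $g(t)=t^{\alpha }\left( ta+(1-t)x\right) ^{-2}\left\vert f^{\prime }\right\vert $. The first factor is then $1$, and the second contains $t^{\alpha q}\left( ta+(1-t)x\right) ^{-2q}\left\vert f^{\prime }\right\vert ^{q}$; applying harmonic $s$-convexity and the substitution you describe (pulling out $x^{-2q}$ and matching against the Euler integral for $_{2}F_{1}$ with $z=1-a/x$, $\left\vert z\right\vert <1$) yields exactly $\lambda _{1}(a,x,s,q,\alpha q)$ and $\lambda _{2}(a,x,s,q,\alpha q)$, and analogously for the $b$-term. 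So the fix is a one-line change — put the whole kernel, not just $\left\vert f^{\prime }\right\vert $, inside the $L^{q}$ factor — after which the rest of your argument (the $s$-convexity step and the hypergeometric identifications) goes through as you outlined.
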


\begin{proof}
From Lemma \ref{2.1} and power mean inequality and using the harmonically $s$%
-convexity of $\left\vert f^{\prime }\right\vert ^{q}$ on $[a,b],$we have%
\begin{eqnarray*}
&&\left\vert S_{f}\left( g;\alpha ;x,a,b\right) \right\vert \\
&\leq &\frac{\left( x-a\right) ^{\alpha +1}}{\left( ax\right) ^{\alpha -1}}%
\dint\limits_{0}^{1}\frac{t^{\alpha }}{\left( ta+(1-t)x\right) ^{2}}%
\left\vert f^{\prime }\left( \frac{ax}{ta+(1-t)x}\right) \right\vert dt \\
&&+\frac{\left( b-x\right) ^{\alpha +1}}{\left( bx\right) ^{\alpha -1}}%
\dint\limits_{0}^{1}\frac{t^{\alpha }}{\left( tb+(1-t)x\right) ^{2}}%
\left\vert f^{\prime }\left( \frac{bx}{tb+(1-t)x}\right) \right\vert dt
\end{eqnarray*}%
\begin{eqnarray}
&\leq &\frac{\left( x-a\right) ^{\alpha +1}}{\left( ax\right) ^{\alpha -1}}%
\left( \dint\limits_{0}^{1}1dt\right) ^{1-\frac{1}{q}}  \label{2-2a} \\
&&\times \left( \dint\limits_{0}^{1}\frac{t^{\alpha q}}{\left(
ta+(1-t)x\right) ^{2q}}\left[ t^{s}\left\vert f^{\prime }\left( x\right)
\right\vert ^{q}+(1-t)^{s}\left\vert f^{\prime }\left( a\right) \right\vert
^{q}\right] dt\right) ^{\frac{1}{q}}  \notag
\end{eqnarray}%
\begin{eqnarray*}
&&+\frac{\left( b-x\right) ^{\alpha +1}}{\left( bx\right) ^{\alpha -1}}%
\left( \dint\limits_{0}^{1}1dt\right) ^{1-\frac{1}{q}} \\
&&\times \left( \dint\limits_{0}^{1}\frac{t^{\alpha q}}{\left(
tb+(1-t)x\right) ^{2q}}\left[ t^{s}\left\vert f^{\prime }\left( x\right)
\right\vert ^{q}+(1-t)^{s}\left\vert f^{\prime }\left( b\right) \right\vert
^{q}\right] dt\right) ^{\frac{1}{q}},
\end{eqnarray*}%
where an easy calculation gives%
\begin{equation}
\dint\limits_{0}^{1}\frac{t^{\alpha q+s}}{\left( ta+(1-t)x\right) ^{2q}}dt=%
\frac{\beta \left( \alpha q+s+1,1\right) }{x^{2q}}._{2}F_{1}\left( 2q,\alpha
q+s+1;\alpha q+s+2;1-\frac{a}{x}\right) ,  \label{2-2b}
\end{equation}%
\begin{equation*}
\dint\limits_{0}^{1}\frac{t^{\alpha q}(1-t)^{s}}{\left( ta+(1-t)x\right)
^{2q}}dt=\frac{\beta \left( \alpha q+1,s+1\right) }{x^{2q}}._{2}F_{1}\left(
2q,\alpha q+1;s+\alpha q+2;1-\frac{a}{x}\right) ,
\end{equation*}%
\begin{equation*}
\dint\limits_{0}^{1}\frac{t^{\alpha q+s}}{\left( tb+(1-t)x\right) ^{2q}}dt=%
\frac{\beta \left( 1,\alpha q+s+1\right) }{b^{2q}}._{2}F_{1}\left(
2q,1;\alpha q+s+2;1-\frac{x}{b}\right) ,
\end{equation*}%
\begin{equation}
\dint\limits_{0}^{1}\frac{t^{\alpha q}(1-t)^{s}}{\left( tb+(1-t)x\right)
^{2q}}dt=\frac{\beta \left( s+1,\alpha q+1\right) }{b^{2q}}._{2}F_{1}\left(
2q,s+1;s+\alpha q+2;1-\frac{x}{b}\right) .  \label{2-2c}
\end{equation}%
Hence, If we use (\ref{2-2b})-(\ref{2-2c}) in (\ref{2-2a}), we obtain the
desired result. This completes the proof.
\end{proof}

\begin{remark}
\bigskip In Theorem \ref{2.2}, if we take $\alpha =1$, then the identity (%
\ref{2-2}) reduces the identity (\ref{1-6}) of Theorem \ref{2.6}.
\end{remark}

\begin{corollary}
\bigskip In Theorem \ref{2.2}, if $|f^{\prime }(x)|\leq M$, $x\in \left[ a,b%
\right] ,$ then inequality%
\begin{equation*}
\left\vert S_{f}\left( g;\alpha ;x,a,b\right) \right\vert
\end{equation*}%
\begin{eqnarray*}
&\leq &M\left\{ \frac{\left( x-a\right) ^{\alpha +1}}{\left( ax\right)
^{\alpha -1}}\left[ \lambda _{1}(a,x,s,q,\alpha q)+\lambda
_{2}((a,x,s,q,\alpha q)\right] ^{\frac{1}{q}}\right. \\
&&\left. +\frac{\left( b-x\right) ^{\alpha +1}}{\left( bx\right) ^{\alpha -1}%
}\left[ \lambda _{3}(b,x,s,q,\alpha q)+\lambda _{4}(b,x,s,q,\alpha q)\right]
^{\frac{1}{q}}\right\} ,
\end{eqnarray*}%
holds.
\end{corollary}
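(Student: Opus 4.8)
The plan is to derive this corollary directly from Theorem \ref{2.2} by a crude bounding argument. The hypothesis $|f'(x)|\le M$ on $[a,b]$ implies in particular that $|f'(x)|^q\le M^q$, $|f'(a)|^q\le M^q$, and $|f'(b)|^q\le M^q$. Substituting these three bounds into the right-hand side of \eqref{2-2}, each bracket of the form $\lambda_1\left\vert f'(x)\right\vert^q+\lambda_2\left\vert f'(a)\right\vert^q$ is dominated by $M^q\left(\lambda_1+\lambda_2\right)$, and similarly for the $b$-term. Pulling $M^q$ out of the $\tfrac1q$-th power yields the factor $M$ in front, giving exactly the asserted inequality.

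The steps, in order, are: first invoke Theorem \ref{2.2} to get \eqref{2-2}; second, apply the elementary monotonicity of $t\mapsto t^{1/q}$ together with nonnegativity of the coefficients $\lambda_i$ (which are products of a Beta function value, a positive power of $x$ or $b$, and a Gauss hypergeometric value with positive parameters evaluated at an argument in $[0,1)$, hence positive) to replace $\left\vert f'(\cdot)\right\vert^q$ uniformly by $M^q$; third, factor $M=(M^q)^{1/q}$ out of each of the two $\tfrac1q$-power terms; fourth, collect the two contributions to obtain the displayed bound. One should note in passing that the coefficients $\lambda_i$ depend on $x$ (and on $a$ or $b$) but not on $f$, so the estimate is uniform over all $f$ with $|f'|\le M$.

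There is essentially no obstacle here: the only point requiring a word of care is the legitimacy of the pointwise replacement inside the bracket, which rests on $\lambda_i\ge 0$; this follows because, for $x\in[a,b]$, the arguments $1-\tfrac{a}{x}$ and $1-\tfrac{x}{b}$ lie in $[0,1)$, so the integral representation of $_2F_1$ in the Introduction shows each hypergeometric factor is a positive real number, and the Beta values and the powers $x^{-2\vartheta}$, $b^{-2\vartheta}$ are manifestly positive. With this observed, the chain of inequalities is immediate and the proof is complete.
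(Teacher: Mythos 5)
Your proof is correct and is exactly the argument the paper intends (the paper states this corollary without proof, as an immediate consequence of Theorem \ref{2.2}): bound $|f'(x)|^{q}$, $|f'(a)|^{q}$, $|f'(b)|^{q}$ by $M^{q}$, use nonnegativity of the $\lambda_{i}$, and factor $M=(M^{q})^{1/q}$ out of each $\tfrac{1}{q}$-power. Your added remark justifying $\lambda_{i}\ge 0$ via the integral representation of $_{2}F_{1}$ with argument $1-\tfrac{a}{x},\,1-\tfrac{x}{b}\in[0,1)$ is a welcome extra detail the paper leaves implicit.
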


\begin{theorem}
\label{2.3}Let $f:I\subset \left( 0,\infty \right) \rightarrow 
\mathbb{R}
$ be a differentiable function on $I^{\circ }$, $a,b\in I$ with $a<b,$ and $%
f^{\prime }\in L[a,b].$ If $\left\vert f^{\prime }\right\vert ^{q}$ is
harmonically $s$- convex on $[a,b]$ for $q\geq 1,$ then for all $x\in \left[
a,b\right] $, we have%
\begin{equation}
\left\vert S_{f}\left( g;\alpha ;x,a,b\right) \right\vert  \label{2-3}
\end{equation}%
\begin{eqnarray*}
&\leq &\left( \frac{1}{\alpha +1}\right) ^{1-\frac{1}{q}}\left\{ \frac{%
\left( x-a\right) ^{\alpha +1}}{\left( ax\right) ^{\alpha -1}}\left( \lambda
_{1}(a,x,s,q,\alpha )\left\vert f^{\prime }\left( x\right) \right\vert
^{q}+\lambda _{2}(a,x,s,q,\alpha )\left\vert f^{\prime }\left( a\right)
\right\vert ^{q}\right) ^{\frac{1}{q}}\right. \\
&&+\left. \frac{\left( b-x\right) ^{\alpha +1}}{\left( bx\right) ^{\alpha -1}%
}\left( \lambda _{3}(b,x,s,q,\alpha )\left\vert f^{\prime }\left( x\right)
\right\vert ^{q}+\lambda _{4}(b,x,s,q,\alpha )\left\vert f^{\prime }\left(
b\right) \right\vert ^{q}\right) ^{\frac{1}{q}}\right\}
\end{eqnarray*}%
where $\alpha >0$ and $\lambda _{1}$, $\lambda _{2}$, $\lambda _{3}$ and $%
\lambda _{4}$ are defined as in Theorem \ref{2.2}.
\end{theorem}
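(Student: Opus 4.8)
The plan is to follow the same architecture as the proof of Theorem \ref{2.2}, changing only the way the integrand is decomposed before the power mean inequality is applied. First I would use the identity (\ref{2-1}) of Lemma \ref{2.1} together with the triangle inequality to get
\begin{align*}
\left\vert S_{f}\left( g;\alpha ;x,a,b\right) \right\vert &\leq \frac{\left( x-a\right) ^{\alpha +1}}{\left( ax\right) ^{\alpha -1}}\dint\limits_{0}^{1}\frac{t^{\alpha }}{\left( ta+(1-t)x\right) ^{2}}\left\vert f^{\prime }\left( \frac{ax}{ta+(1-t)x}\right) \right\vert dt \\
&\quad +\frac{\left( b-x\right) ^{\alpha +1}}{\left( bx\right) ^{\alpha -1}}\dint\limits_{0}^{1}\frac{t^{\alpha }}{\left( tb+(1-t)x\right) ^{2}}\left\vert f^{\prime }\left( \frac{bx}{tb+(1-t)x}\right) \right\vert dt .
\end{align*}
To each of these two integrals I would then apply the power mean inequality in the weighted form $\int_{0}^{1}\phi \psi \leq \left( \int_{0}^{1}\phi \right) ^{1-1/q}\left( \int_{0}^{1}\phi \psi ^{q}\right) ^{1/q}$ with the weight $\phi (t)=t^{\alpha }$, whereas in Theorem \ref{2.2} the weight $\phi \equiv 1$ was used. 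Since $\int_{0}^{1}t^{\alpha }dt=\frac{1}{\alpha +1}$, this is exactly what produces the common factor $\left( \frac{1}{\alpha +1}\right) ^{1-1/q}$ appearing in (\ref{2-3}).

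Next I would invoke the harmonically $s$-convexity of $\left\vert f^{\prime }\right\vert ^{q}$, namely $\left\vert f^{\prime }\left( \frac{ax}{ta+(1-t)x}\right) \right\vert ^{q}\leq t^{s}\left\vert f^{\prime }(x)\right\vert ^{q}+(1-t)^{s}\left\vert f^{\prime }(a)\right\vert ^{q}$ and the analogous bound with $b$ in place of $a$, inside the $\left( \int_{0}^{1}\phi \psi ^{q}\right) ^{1/q}$ factors. Expanding by linearity then reduces everything to the four elementary integrals
\[
\dint\limits_{0}^{1}\frac{t^{\alpha +s}}{\left( ta+(1-t)x\right) ^{2q}}dt\qquad \text{and}\qquad \dint\limits_{0}^{1}\frac{t^{\alpha }(1-t)^{s}}{\left( ta+(1-t)x\right) ^{2q}}dt
\]
together with the two obtained by replacing $a$ with $b$. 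I would evaluate these by the same device used in the proof of Theorem \ref{2.2}: write $ta+(1-t)x=x\bigl( 1-(1-\frac{a}{x})t\bigr) $, and, after the substitution $t\mapsto 1-t$, $tb+(1-t)x=b\bigl( 1-(1-\frac{x}{b})(1-t)\bigr) $, and then match against the integral representation of the hypergeometric function recalled above. This identifies the four integrals with $\lambda _{1}(a,x,s,q,\alpha )$, $\lambda _{2}(a,x,s,q,\alpha )$, $\lambda _{3}(b,x,s,q,\alpha )$ and $\lambda _{4}(b,x,s,q,\alpha )$ respectively, and collecting the two groups of terms yields (\ref{2-3}).

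There is no genuinely hard step here; the one point that requires care is the bookkeeping of the parameters of $\lambda _{1},\dots ,\lambda _{4}$. Because the weight pulled out of the power mean is $t^{\alpha }$ rather than $t^{\alpha q}$, the four integrals above carry the exponent $\alpha $ (not $\alpha q$) on the powers of $t$, so they correspond to $\lambda _{i}$ with $\vartheta =q$ and $\rho =\alpha $ --- this single change in the last argument, from $\alpha q$ to $\alpha $, is the only place where the different decomposition shows up in the final estimate, and it should be checked carefully against the definitions recorded in Theorem \ref{2.2}. One should also note that the hypergeometric representation is valid because $\left\vert 1-\frac{a}{x}\right\vert <1$ and $\left\vert 1-\frac{x}{b}\right\vert <1$ for $a\leq x\leq b$ in $\left( 0,\infty \right) $, the degenerate cases $x=a$ and $x=b$ being harmless since then the factor $\left( x-a\right) ^{\alpha +1}$, respectively $\left( b-x\right) ^{\alpha +1}$, vanishes. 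As a consistency check, for $\alpha =1$ the resulting inequality reduces, after multiplying through by $\frac{ab}{b-a}$, to (\ref{1-7}) of Theorem \ref{1.7}.
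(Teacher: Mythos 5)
Your proposal is correct and follows essentially the same route as the paper: the paper likewise applies the power mean inequality with weight $t^{\alpha}$ (rather than the constant weight used in Theorem \ref{2.2}), extracts the factor $\left( \frac{1}{\alpha +1}\right) ^{1-\frac{1}{q}}$ from $\int_{0}^{1}t^{\alpha }dt$, and identifies the remaining integrals with $\lambda _{i}(\cdot ,\cdot ,s,q,\alpha )$ via the hypergeometric representations (\ref{2-2b})--(\ref{2-2c}). Your parameter bookkeeping ($\vartheta =q$, $\rho =\alpha $) matches the paper's.
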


\begin{proof}
From Lemma \ref{2.1}, power mean inequality and the harmonically $s$%
-convexity of $\left\vert f^{\prime }\right\vert ^{q}$ on $[a,b],$we have%
\begin{eqnarray*}
&&\left\vert S_{f}\left( g;\alpha ;x,a,b\right) \right\vert \\
&\leq &\frac{\left( x-a\right) ^{\alpha +1}}{\left( ax\right) ^{\alpha -1}}%
\left( \dint\limits_{0}^{1}t^{\alpha }dt\right) ^{1-\frac{1}{q}} \\
&&\times \left( \dint\limits_{0}^{1}\frac{t^{\alpha }}{\left(
ta+(1-t)x\right) ^{2q}}\left[ t^{s}\left\vert f^{\prime }\left( x\right)
\right\vert ^{q}+(1-t)^{s}\left\vert f^{\prime }\left( a\right) \right\vert
^{q}\right] dt\right) ^{\frac{1}{q}}
\end{eqnarray*}%
\begin{eqnarray*}
&&+\frac{\left( b-x\right) ^{\alpha +1}}{\left( bx\right) ^{\alpha -1}}%
\left( \dint\limits_{0}^{1}t^{\alpha }dt\right) ^{1-\frac{1}{q}} \\
&&\times \left( \dint\limits_{0}^{1}\frac{t^{\alpha }}{\left(
tb+(1-t)x\right) ^{2q}}\left[ t^{s}\left\vert f^{\prime }\left( x\right)
\right\vert ^{q}+(1-t)^{s}\left\vert f^{\prime }\left( b\right) \right\vert
^{q}\right] dt\right) ^{\frac{1}{q}}
\end{eqnarray*}%
\begin{eqnarray*}
&\leq &\left( \frac{1}{\alpha +1}\right) ^{1-\frac{1}{q}}\left\{ \frac{%
\left( x-a\right) ^{\alpha +1}}{\left( ax\right) ^{\alpha -1}}\left( \lambda
_{1}(a,x,s,q,\alpha )\left\vert f^{\prime }\left( x\right) \right\vert
^{q}+\lambda _{2}(a,x,s,q,\alpha )\left\vert f^{\prime }\left( a\right)
\right\vert ^{q}\right) ^{\frac{1}{q}}\right. \\
&&+\left. \frac{\left( b-x\right) ^{\alpha +1}}{\left( bx\right) ^{\alpha -1}%
}\left( \lambda _{3}(b,x,s,q,\alpha )\left\vert f^{\prime }\left( x\right)
\right\vert ^{q}+\lambda _{4}(b,x,s,q,\alpha )\left\vert f^{\prime }\left(
b\right) \right\vert ^{q}\right) ^{\frac{1}{q}}\right\}
\end{eqnarray*}%
This completes the proof.
\end{proof}

\begin{remark}
\bigskip In Theorem \ref{2.3}, if we take $\alpha =1$, then the identity (%
\ref{2-3}) reduces the identity (\ref{1-7}) of Theorem \ref{1.7}.
\end{remark}

\begin{corollary}
\bigskip In Theorem \ref{2.3}, if $|f^{\prime }(x)|\leq M$, $x\in \left[ a,b%
\right] ,$ then inequality%
\begin{equation*}
\left\vert S_{f}\left( g;\alpha ;x,a,b\right) \right\vert \leq M\left( \frac{%
1}{\alpha +1}\right) ^{1-\frac{1}{q}}
\end{equation*}%
\begin{eqnarray*}
&&\times \left\{ \frac{\left( x-a\right) ^{\alpha +1}}{\left( ax\right)
^{\alpha -1}}\left[ \lambda _{1}(a,x,s,q,\alpha )+\lambda
_{2}(a,x,s,q,\alpha )\right] ^{\frac{1}{q}}\right. \\
&&\left. +\frac{\left( b-x\right) ^{\alpha +1}}{\left( bx\right) ^{\alpha -1}%
}\left[ \lambda _{3}(b,x,s,q,\alpha )+\lambda _{4}(b,x,s,q,\alpha ))\right]
^{\frac{1}{q}}\right\} ,
\end{eqnarray*}%
holds.
\end{corollary}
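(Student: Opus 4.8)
The plan is to derive this directly from Theorem~\ref{2.3}, since the hypotheses here are exactly those of Theorem~\ref{2.3} with the extra assumption $\left\vert f^{\prime}(x)\right\vert \leq M$ on $[a,b]$. First I would simply invoke Theorem~\ref{2.3} to obtain
\begin{eqnarray*}
\left\vert S_{f}\left( g;\alpha ;x,a,b\right) \right\vert
&\leq &\left( \frac{1}{\alpha +1}\right) ^{1-\frac{1}{q}}\left\{ \frac{\left( x-a\right) ^{\alpha +1}}{\left( ax\right) ^{\alpha -1}}\left( \lambda _{1}(a,x,s,q,\alpha )\left\vert f^{\prime }\left( x\right) \right\vert ^{q}+\lambda _{2}(a,x,s,q,\alpha )\left\vert f^{\prime }\left( a\right) \right\vert ^{q}\right) ^{\frac{1}{q}}\right. \\
&&+\left. \frac{\left( b-x\right) ^{\alpha +1}}{\left( bx\right) ^{\alpha -1}}\left( \lambda _{3}(b,x,s,q,\alpha )\left\vert f^{\prime }\left( x\right) \right\vert ^{q}+\lambda _{4}(b,x,s,q,\alpha )\left\vert f^{\prime }\left( b\right) \right\vert ^{q}\right) ^{\frac{1}{q}}\right\} .
\end{eqnarray*}

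Next I would use the bound $\left\vert f^{\prime}(x)\right\vert \leq M$, which gives $\left\vert f^{\prime}(x)\right\vert ^{q}\leq M^{q}$, $\left\vert f^{\prime}(a)\right\vert ^{q}\leq M^{q}$, and $\left\vert f^{\prime}(b)\right\vert ^{q}\leq M^{q}$. Substituting these into the two parenthesized factors and using that all the $\lambda_i$ are nonnegative, each factor of the form $\left(\lambda_i|f'(\cdot)|^q+\lambda_j|f'(\cdot)|^q\right)^{1/q}$ is at most $\left(M^q(\lambda_i+\lambda_j)\right)^{1/q}=M(\lambda_i+\lambda_j)^{1/q}$. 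Factoring the common $M$ out of the braces then yields exactly the claimed inequality.

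There is essentially no obstacle here: the only things to check are that the $\lambda_i(a,x,s,q,\alpha)$ and $\lambda_i(b,x,s,q,\alpha)$ are genuinely nonnegative (which is immediate from the integral representations \eqref{2-2b}--\eqref{2-2c}, being integrals of nonnegative integrands, equivalently from positivity of the Beta function and of $_2F_1$ with the relevant positive parameters and argument in $(0,1)$), and that $1/q>0$ so the map $u\mapsto u^{1/q}$ is monotone increasing, which legitimizes replacing each $|f'(\cdot)|^q$ by $M^q$ inside the $q$th roots. Both are routine, so the corollary follows immediately.
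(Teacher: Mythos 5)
Your proposal is correct and is exactly the argument the paper intends: the corollary is an immediate consequence of Theorem \ref{2.3} obtained by bounding $|f'(x)|^{q}$, $|f'(a)|^{q}$, $|f'(b)|^{q}$ by $M^{q}$, using the nonnegativity of the $\lambda_{i}$ and the monotonicity of $u\mapsto u^{1/q}$, and factoring out $M$. Nothing further is needed.
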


\begin{theorem}
\label{2.4}Let $f:I\subset \left( 0,\infty \right) \rightarrow 
\mathbb{R}
$ be a differentiable function on $I^{\circ }$, $a,b\in I$ with $a<b,$ and $%
f^{\prime }\in L[a,b].$ If $\left\vert f^{\prime }\right\vert ^{q}$ is
harmonically $s$- convex on $[a,b]$ for $q\geq 1,$ then for all $x\in \left[
a,b\right] $, we have%
\begin{equation}
\left\vert S_{f}\left( g;\alpha ;x,a,b\right) \right\vert  \label{2-4}
\end{equation}%
\begin{eqnarray*}
&\leq &\left( \frac{1}{\alpha +1}\right) ^{1-\frac{1}{q}}\left\{ \lambda
_{5}^{1-\frac{1}{q}}(a,x,\alpha )\frac{\left( x-a\right) ^{\alpha +1}}{%
\left( ax\right) ^{\alpha -1}}\left( \lambda _{1}(a,x,s,1,\alpha )\left\vert
f^{\prime }\left( x\right) \right\vert ^{q}+\lambda _{2}(a,x,s,1,\alpha
)\left\vert f^{\prime }\left( a\right) \right\vert ^{q}\right) ^{\frac{1}{q}%
}\right. \\
&&+\left. \lambda _{6}^{1-\frac{1}{q}}(b,x,\alpha )\frac{\left( b-x\right)
^{\alpha +1}}{\left( bx\right) ^{\alpha -1}}\left( \lambda
_{3}(b,x,s,1,\alpha )\left\vert f^{\prime }\left( x\right) \right\vert
^{q}+\lambda _{4}(b,x,s,1,\alpha )\left\vert f^{\prime }\left( b\right)
\right\vert ^{q}\right) ^{\frac{1}{q}}\right\}
\end{eqnarray*}%
where 
\begin{eqnarray*}
\lambda _{5}(a,x,\alpha ) &=&\frac{1}{x^{2}}._{2}F_{1}(2,\alpha +1;\alpha
+2;1-\frac{a}{x}), \\
\lambda _{6}(a,x,\alpha ) &=&\frac{1}{b^{2}}._{2}F_{1}(2,1;\alpha +2;1-\frac{%
x}{b})
\end{eqnarray*}%
and $\lambda _{1}$, $\lambda _{2}$, $\lambda _{3}$ and $\lambda _{4}$ are
defined as in Theorem \ref{2.2}.
\end{theorem}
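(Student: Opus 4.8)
The plan is to follow the template of the proofs of Theorems \ref{2.2} and \ref{2.3}, but now to absorb the \emph{whole} kernel $t^{\alpha }/(ta+(1-t)x)^{2}$ (respectively $t^{\alpha }/(tb+(1-t)x)^{2}$) into the weight of the power mean inequality, rather than only the factor $1$ or $t^{\alpha }$. First I would start from the identity of Lemma \ref{2.1}, move the absolute value under the integral sign, and bound $\left\vert S_{f}\left( g;\alpha ;x,a,b\right) \right\vert $ by the sum of $\frac{\left( x-a\right) ^{\alpha +1}}{\left( ax\right) ^{\alpha -1}}\int_{0}^{1}\frac{t^{\alpha }}{\left( ta+(1-t)x\right) ^{2}}\left\vert f^{\prime }\left( \frac{ax}{ta+(1-t)x}\right) \right\vert dt$ and the analogous term with $b$ in place of $a$ and the minus sign absorbed into the absolute value. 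To each of these two integrals I would apply the power mean inequality in the form $\int_{0}^{1}w\varphi \leq \left( \int_{0}^{1}w\right) ^{1-1/q}\left( \int_{0}^{1}w\varphi ^{q}\right) ^{1/q}$ ($q\geq 1$), taking $w(t)=t^{\alpha }/(ta+(1-t)x)^{2}$ (respectively $w(t)=t^{\alpha }/(tb+(1-t)x)^{2}$) and $\varphi =\left\vert f^{\prime }(\cdot )\right\vert $.

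For the leading factor I would compute $\int_{0}^{1}t^{\alpha }/(ta+(1-t)x)^{2}\,dt$ by pulling $x^{2}$ out of the denominator and recognising the Euler integral representation of ${}_{2}F_{1}$; since $\beta (\alpha +1,1)=1/(\alpha +1)$ this equals $\frac{1}{\alpha +1}\lambda _{5}(a,x,\alpha )$, and likewise $\int_{0}^{1}t^{\alpha }/(tb+(1-t)x)^{2}\,dt=\frac{1}{\alpha +1}\lambda _{6}(b,x,\alpha )$ after the substitution $t\mapsto 1-t$ and pulling out $b^{2}$. Raising these to the power $1-1/q$ and factoring the common $\left( 1/(\alpha +1)\right) ^{1-1/q}$ outside the braces reproduces the prefactors in the statement. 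For the remaining $q$-th power factor I would invoke the harmonic $s$-convexity of $\left\vert f^{\prime }\right\vert ^{q}$, that is, $\left\vert f^{\prime }\left( \frac{ax}{ta+(1-t)x}\right) \right\vert ^{q}\leq t^{s}\left\vert f^{\prime }(x)\right\vert ^{q}+(1-t)^{s}\left\vert f^{\prime }(a)\right\vert ^{q}$ and its $b$-analogue, substitute, split into two integrals, and evaluate $\int_{0}^{1}t^{\alpha +s}/(ta+(1-t)x)^{2}\,dt$ and $\int_{0}^{1}t^{\alpha }(1-t)^{s}/(ta+(1-t)x)^{2}\,dt$ by the same computation as in (\ref{2-2b})--(\ref{2-2c}) but with denominator exponent $2$ in place of $2q$. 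Comparing with the definitions of $\lambda _{1},\ldots ,\lambda _{4}$ from Theorem \ref{2.2} with $\vartheta =1$ and $\rho =\alpha $ (so that the first entry of ${}_{2}F_{1}$ is $2\vartheta =2$), these four integrals are precisely $\lambda _{1}(a,x,s,1,\alpha )$, $\lambda _{2}(a,x,s,1,\alpha )$, $\lambda _{3}(b,x,s,1,\alpha )$, $\lambda _{4}(b,x,s,1,\alpha )$. Collecting the three factors in each of the two terms then gives the announced inequality; taking $\alpha =1$ and using $\beta (2,1)=1/2$ recovers Theorem \ref{1.8}.

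The only step that needs care --- and really the only obstacle --- is the bookkeeping in the four elementary $\beta$-${}_{2}F_{1}$ integrals: one must read off the parameters correctly in each instance of $\int_{0}^{1}t^{\mu -1}(1-t)^{\nu -\mu -1}(1-zt)^{-\kappa }\,dt=\beta (\mu ,\nu -\mu )\,{}_{2}F_{1}(\kappa ,\mu ;\nu ;z)$, and for the integrals over the part of the interval near $b$ one must first substitute $t\mapsto 1-t$ so that the argument $z=1-x/b$ of the hypergeometric function lies in $(0,1)$ and the vanishing factor is attached to the correct endpoint. Beyond this transcription, the argument is a routine adaptation of the proofs already given for Theorems \ref{2.2} and \ref{2.3}.
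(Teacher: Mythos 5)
Your proposal is correct and follows essentially the same route as the paper: the paper's proof of Theorem \ref{2.4} applies the power mean inequality with exactly the full kernel $t^{\alpha }/(ta+(1-t)x)^{2}$ (resp.\ $t^{\alpha }/(tb+(1-t)x)^{2}$) as the weight, evaluates $\int_{0}^{1}t^{\alpha }/(ta+(1-t)x)^{2}\,dt=\frac{1}{\alpha +1}\lambda _{5}(a,x,\alpha )$ and its $b$-analogue as in your sketch, and reuses the integrals (\ref{2-2b})--(\ref{2-2c}) with $q=1$ to identify $\lambda _{1},\ldots ,\lambda _{4}$ with $\vartheta =1$, $\rho =\alpha $. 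Your parameter bookkeeping in the Euler integral representation, including the $t\mapsto 1-t$ substitution for the $b$-side integrals, matches the paper's computation.
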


\begin{proof}
From Lemma \ref{2.1}, Power mean inequality and the harmonically $s$%
-convexity of $\left\vert f^{\prime }\right\vert ^{q}$ on $[a,b],$we have%
\begin{eqnarray}
&&\left\vert S_{f}\left( g;\alpha ;x,a,b\right) \right\vert  \label{2-4a} \\
&\leq &\frac{\left( x-a\right) ^{\alpha +1}}{\left( ax\right) ^{\alpha -1}}%
\left( \dint\limits_{0}^{1}\frac{t^{\alpha }}{\left( ta+(1-t)x\right) ^{2}}%
dt\right) ^{1-\frac{1}{q}}  \notag \\
&&\times \left( \dint\limits_{0}^{1}\frac{t^{\alpha }}{\left(
ta+(1-t)x\right) ^{2}}\left[ t^{s}\left\vert f^{\prime }\left( x\right)
\right\vert ^{q}+(1-t)^{s}\left\vert f^{\prime }\left( a\right) \right\vert
^{q}\right] dt\right) ^{\frac{1}{q}}  \notag
\end{eqnarray}%
\begin{eqnarray*}
&&+\frac{\left( b-x\right) ^{\alpha +1}}{\left( bx\right) ^{\alpha -1}}%
\left( \dint\limits_{0}^{1}\frac{t^{\alpha }}{\left( tb+(1-t)x\right) ^{2}}%
dt\right) ^{1-\frac{1}{q}} \\
&&\times \left( \dint\limits_{0}^{1}\frac{t^{\alpha }}{\left(
tb+(1-t)x\right) ^{2}}\left[ t^{s}\left\vert f^{\prime }\left( x\right)
\right\vert ^{q}+(1-t)^{s}\left\vert f^{\prime }\left( b\right) \right\vert
^{q}\right] dt\right) ^{\frac{1}{q}}.
\end{eqnarray*}%
It is easily check that%
\begin{equation}
\dint\limits_{0}^{1}\frac{t^{\alpha }}{\left( ta+(1-t)x\right) ^{2}}dt=\frac{%
1}{x^{2}(\alpha +1)}._{2}F_{1}(2,\alpha +1;\alpha +2;1-\frac{a}{x}),
\label{2-4b}
\end{equation}%
\begin{equation*}
\dint\limits_{0}^{1}\frac{t^{\alpha }}{\left( tb+(1-t)x\right) ^{2}}dt=\frac{%
1}{b^{2}(\alpha +1)},_{2}F_{1}(2,1;\alpha +2;1-\frac{x}{b}).
\end{equation*}%
Hence, If we use (\ref{2-2b})-(\ref{2-2c}) \ for $q=1$ and (\ref{2-4b}) in (%
\ref{2-4a}), we obtain the desired result. This completes the proof.
\end{proof}

\begin{remark}
\bigskip In Theorem \ref{2.4}, if we take $\alpha =1$, then the identity (%
\ref{2-4}) reduces the identity (\ref{1-8}) of Theorem \ref{1.8}.
\end{remark}

\begin{corollary}
\bigskip In Theorem \ref{2.4}, if $|f^{\prime }(x)|\leq M$, $x\in \left[ a,b%
\right] ,$ then inequality%
\begin{equation*}
\left\vert S_{f}\left( g;\alpha ;x,a,b\right) \right\vert \leq M\left( \frac{%
1}{\alpha +1}\right) ^{1-\frac{1}{q}}
\end{equation*}%
\begin{eqnarray*}
&&\times \left\{ \lambda _{5}^{1-\frac{1}{q}}(a,x,\alpha )\frac{\left(
x-a\right) ^{\alpha +1}}{\left( ax\right) ^{\alpha -1}}\left[ \lambda
_{1}(a,x,s,1,\alpha )+\lambda _{2}(a,x,s,1,\alpha )\right] ^{\frac{1}{q}%
}\right. \\
&&+\left. \lambda _{6}^{1-\frac{1}{q}}(b,x,\alpha )\frac{\left( b-x\right)
^{\alpha +1}}{\left( bx\right) ^{\alpha -1}}\left[ \lambda
_{3}(b,x,s,1,\alpha )+\lambda _{4}(b,x,s,1,\alpha )\right] ^{\frac{1}{q}%
}\right\}
\end{eqnarray*}%
holds.
\end{corollary}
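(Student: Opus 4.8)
The plan is to obtain the corollary as a direct specialization of Theorem~\ref{2.4}, in the same spirit as the corollaries following Theorems~\ref{2.2} and \ref{2.3}: all the standing hypotheses of Theorem~\ref{2.4} are retained, and one only adds the uniform bound $\left\vert f^{\prime }(t)\right\vert \leq M$ for every $t\in \left[ a,b\right] $. No new identity or integral evaluation is needed; the entire argument reduces to estimating the right-hand side of (\ref{2-4}).

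First I would write down the estimate (\ref{2-4}) unchanged, with $\lambda _{1},\lambda _{2}$ evaluated at $(a,x,s,1,\alpha )$, with $\lambda _{3},\lambda _{4}$ evaluated at $(b,x,s,1,\alpha )$, and with the scalar factors $\lambda _{5}(a,x,\alpha )$ and $\lambda _{6}(b,x,\alpha )$ as in that theorem. Next I would observe that all of these coefficients are nonnegative: by the integral identities (\ref{2-2b})--(\ref{2-2c}) (here with $q=1$) and by (\ref{2-4b}), each of $\lambda _{1},\lambda _{2},\lambda _{3},\lambda _{4},\lambda _{5},\lambda _{6}$ equals the value of an integral over $[0,1]$ of a function that is nonnegative on $(0,1)$, hence is itself $\geq 0$. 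Since $t\mapsto t^{1/q}$ is nondecreasing on $[0,\infty )$, substituting the bounds $\left\vert f^{\prime }(x)\right\vert ^{q},\left\vert f^{\prime }(a)\right\vert ^{q},\left\vert f^{\prime }(b)\right\vert ^{q}\leq M^{q}$ termwise yields
\[
\left( \lambda _{1}\left\vert f^{\prime }(x)\right\vert ^{q}+\lambda _{2}\left\vert f^{\prime }(a)\right\vert ^{q}\right) ^{\frac{1}{q}}\leq \left( (\lambda _{1}+\lambda _{2})M^{q}\right) ^{\frac{1}{q}}=M(\lambda _{1}+\lambda _{2})^{\frac{1}{q}},
\]
and similarly $\left( \lambda _{3}\left\vert f^{\prime }(x)\right\vert ^{q}+\lambda _{4}\left\vert f^{\prime }(b)\right\vert ^{q}\right) ^{\frac{1}{q}}\leq M(\lambda _{3}+\lambda _{4})^{\frac{1}{q}}$. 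Inserting these two estimates into (\ref{2-4}) and pulling the common factor $M$ out of the braces gives exactly the asserted inequality.

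The only genuinely nonmechanical point is the nonnegativity of the coefficients $\lambda _{i}$, which is what licenses the termwise substitution $\left\vert f^{\prime }\right\vert ^{q}\leq M^{q}$ under the $q$-th root; this is immediate from the representations (\ref{2-2b})--(\ref{2-2c}) and (\ref{2-4b}), whose integrands are visibly $\geq 0$ on $(0,1)$. One also has to keep the parameter slots of the $\lambda _{i}$ frozen at the values used in Theorem~\ref{2.4} (third argument $s$, fourth argument $1$, fifth argument $\alpha $); beyond that bookkeeping there is nothing further to prove.
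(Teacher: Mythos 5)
Your proposal is correct and coincides with the paper's (implicit) derivation: the corollary is obtained from the bound \eqref{2-4} of Theorem \ref{2.4} simply by substituting $\left\vert f^{\prime }\right\vert ^{q}\leq M^{q}$ termwise and factoring $M$ out of the $q$-th roots, which is legitimate precisely because the coefficients $\lambda _{1},\dots ,\lambda _{6}$ are integrals of nonnegative functions. Nothing further is needed.
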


\begin{theorem}
\label{2.5}Let $f:I\subset \left( 0,\infty \right) \rightarrow 
\mathbb{R}
$ be a differentiable function on $I^{\circ }$, $a,b\in I$ with $a<b,$ and $%
f^{\prime }\in L[a,b].$ If $\left\vert f^{\prime }\right\vert ^{q}$ is
harmonically $s$-convex on $[a,b]$ for $q>1,\;\frac{1}{p}+\frac{1}{q}=1,$
then%
\begin{equation}
\left\vert S_{f}\left( g;\alpha ;x,a,b\right) \right\vert  \label{2-5}
\end{equation}%
\begin{eqnarray*}
&\leq &\left( \frac{1}{\alpha p+1}\right) ^{\frac{1}{p}}\left\{ \frac{\left(
x-a\right) ^{\alpha +1}}{\left( ax\right) ^{\alpha -1}}\left( \lambda
_{1}(a,x,s,q,0)\left\vert f^{\prime }\left( x\right) \right\vert
^{q}+\lambda _{2}(a,x,s,q,0)\left\vert f^{\prime }\left( a\right)
\right\vert ^{q}\right) ^{\frac{1}{q}}\right. \\
&&+\left. \frac{\left( b-x\right) ^{\alpha +1}}{\left( bx\right) ^{\alpha -1}%
}\left( \lambda _{3}(b,x,s,q,0)\left\vert f^{\prime }\left( x\right)
\right\vert ^{q}+\lambda _{4}(b,x,s,q,0)\left\vert f^{\prime }\left(
b\right) \right\vert ^{q}\right) ^{\frac{1}{q}}\right\} .
\end{eqnarray*}%
where $\lambda _{1}$, $\lambda _{2}$, $\lambda _{3}$ and $\lambda _{4}$ are
defined as in Theorem \ref{2.2}.
\end{theorem}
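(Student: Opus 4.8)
The plan is to follow exactly the template of the proofs of Theorems~\ref{2.2}, \ref{2.3} and \ref{2.4}, but this time applying H\"older's inequality instead of the power-mean inequality. Starting from the identity in Lemma~\ref{2.1} and the triangle inequality, we bound $\left\vert S_{f}\left( g;\alpha ;x,a,b\right) \right\vert$ by the sum of the two integrals
\[
\frac{\left( x-a\right) ^{\alpha +1}}{\left( ax\right) ^{\alpha -1}}\dint\limits_{0}^{1}\frac{t^{\alpha }}{\left( ta+(1-t)x\right) ^{2}}\left\vert f^{\prime }\left( \tfrac{ax}{ta+(1-t)x}\right) \right\vert dt
\]
and the analogous one over $[x,b]$. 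In each, I would split off the factor $t^{\alpha}$ as $t^{\alpha}\cdot 1$ and apply H\"older with exponents $p$ and $q$, writing $t^{\alpha}=t^{\alpha}$ under the $L^{p}$ norm to get $\left(\int_{0}^{1}t^{\alpha p}\,dt\right)^{1/p}=\left(\tfrac{1}{\alpha p+1}\right)^{1/p}$, while the remaining factor $\dfrac{1}{\left( ta+(1-t)x\right)^{2}}\left\vert f'\left(\tfrac{ax}{ta+(1-t)x}\right)\right\vert$ is raised to the $q$-th power.

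Next I would invoke the harmonically $s$-convexity of $\left\vert f^{\prime }\right\vert^{q}$ on $[a,b]$ to estimate
\[
\left\vert f^{\prime }\left( \tfrac{ax}{ta+(1-t)x}\right) \right\vert^{q}\leq t^{s}\left\vert f^{\prime }(x)\right\vert^{q}+(1-t)^{s}\left\vert f^{\prime }(a)\right\vert^{q},
\]
which reduces the $L^{q}$-factor to a linear combination of the two integrals $\int_{0}^{1}\frac{t^{s}}{(ta+(1-t)x)^{2q}}dt$ and $\int_{0}^{1}\frac{(1-t)^{s}}{(ta+(1-t)x)^{2q}}dt$. These are precisely the integrals \eqref{2-2b}--\eqref{2-2c} evaluated with $\alpha$ replaced by $0$ (so the power of $t$ in the numerator is $t^{0+s}$ and $t^{0}(1-t)^{s}$), which identifies them with $\lambda_{1}(a,x,s,q,0)$ and $\lambda_{2}(a,x,s,q,0)$; the same computation over $[x,b]$ yields $\lambda_{3}(b,x,s,q,0)$ and $\lambda_{4}(b,x,s,q,0)$. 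Assembling the two pieces gives exactly the claimed bound.

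The only genuinely delicate point is the bookkeeping in the H\"older split: one must be careful that the factor carried into the $L^{q}$ norm is $\dfrac{1}{(ta+(1-t)x)^{2}}\left\vert f'(\cdot)\right\vert$ rather than $\dfrac{t^{\alpha}}{(ta+(1-t)x)^{2}}\left\vert f'(\cdot)\right\vert$, so that after raising to the $q$-th power the denominator becomes $(ta+(1-t)x)^{2q}$ with no surviving power of $t$ except the $t^{s}$ coming from convexity — this is what forces the last parameter of $\lambda_{1},\dots,\lambda_{4}$ to be $0$ here, in contrast to $\alpha q$ in Theorem~\ref{2.2} and $\alpha$ in Theorem~\ref{2.3}. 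Everything else is the routine substitution $u=\frac{ax}{ta+(1-t)x}$ already used to derive \eqref{2-2b}--\eqref{2-2c}. Finally, setting $\alpha=1$ recovers Theorem~\ref{1.9}, consistent with the remark pattern in the paper.
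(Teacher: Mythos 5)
Your proposal is correct and follows essentially the same route as the paper's own proof: after applying Lemma \ref{2.1} and the triangle inequality, the paper also applies H\"older's inequality with the split $t^{\alpha}\cdot\frac{1}{(ta+(1-t)x)^{2}}\left\vert f'(\cdot)\right\vert$, producing the factor $\left(\frac{1}{\alpha p+1}\right)^{1/p}$ and the $L^{q}$ integrals with denominator $(ta+(1-t)x)^{2q}$, which are then identified with $\lambda_{i}(\cdot,\cdot,s,q,0)$ exactly as you describe. Your remark about the bookkeeping that forces the last parameter to be $0$ is the right observation and matches the paper's computation.
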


\begin{proof}
From Lemma \ref{2.1}, H\"{o}lder's inequality and the harmonically convexity
of $\left\vert f^{\prime }\right\vert ^{q}$ on $[a,b],$we have%
\begin{eqnarray*}
&&\left\vert S_{f}\left( g;\alpha ;x,a,b\right) \right\vert \\
&\leq &\frac{\left( x-a\right) ^{\alpha +1}}{\left( ax\right) ^{\alpha -1}}%
\left( \dint\limits_{0}^{1}t^{\alpha p}dt\right) ^{\frac{1}{p}} \\
&&\times \left( \dint\limits_{0}^{1}\frac{1}{\left( ta+(1-t)x\right) ^{2q}}%
\left[ t^{s}\left\vert f^{\prime }\left( x\right) \right\vert
^{q}+(1-t)^{s}\left\vert f^{\prime }\left( a\right) \right\vert ^{q}\right]
dt\right) ^{\frac{1}{q}}
\end{eqnarray*}%
\begin{eqnarray*}
&&+\frac{\left( b-x\right) ^{\alpha +1}}{\left( bx\right) ^{\alpha -1}}%
\left( \dint\limits_{0}^{1}t^{\alpha p}dt\right) ^{\frac{1}{p}} \\
&&\times \left( \dint\limits_{0}^{1}\frac{1}{\left( tb+(1-t)x\right) ^{2q}}%
\left[ t^{s}\left\vert f^{\prime }\left( x\right) \right\vert
^{q}+(1-t)^{s}\left\vert f^{\prime }\left( b\right) \right\vert ^{q}\right]
dt\right) ^{\frac{1}{q}}
\end{eqnarray*}%
\begin{eqnarray*}
&\leq &\left( \frac{1}{\alpha p+1}\right) ^{\frac{1}{p}}\left\{ \frac{\left(
x-a\right) ^{\alpha +1}}{\left( ax\right) ^{\alpha -1}}\left( \lambda
_{1}(a,x,s,q,0)\left\vert f^{\prime }\left( x\right) \right\vert
^{q}+\lambda _{2}(a,x,s,q,0)\left\vert f^{\prime }\left( a\right)
\right\vert ^{q}\right) ^{\frac{1}{q}}\right. \\
&&+\left. \frac{\left( b-x\right) ^{\alpha +1}}{\left( bx\right) ^{\alpha -1}%
}\left( \lambda _{3}(b,x,s,q,0)\left\vert f^{\prime }\left( x\right)
\right\vert ^{q}+\lambda _{4}(b,x,s,q,0)\left\vert f^{\prime }\left(
b\right) \right\vert ^{q}\right) ^{\frac{1}{q}}\right\} .
\end{eqnarray*}%
This completes the proof.
\end{proof}

\begin{remark}
\bigskip In Theorem \ref{2.5}, if we take $\alpha =1$, then the identity (%
\ref{2-5}) reduces the identity (\ref{1-9}) of Theorem \ref{1.9}.
\end{remark}

\begin{corollary}
\bigskip In Theorem \ref{2.5}, if $|f^{\prime }(x)|\leq M$, $x\in \left[ a,b%
\right] ,$ then inequality%
\begin{equation*}
\left\vert S_{f}\left( g;\alpha ;x,a,b\right) \right\vert \leq M\left( \frac{%
1}{\alpha p+1}\right) ^{\frac{1}{p}}
\end{equation*}%
\begin{eqnarray*}
&&\times \left\{ \frac{\left( x-a\right) ^{\alpha +1}}{\left( ax\right)
^{\alpha -1}}\left( \lambda _{1}(a,x,s,q,0)+\lambda _{2}(a,x,s,q,0)\right) ^{%
\frac{1}{q}}\right. \\
&&+\left. \frac{\left( b-x\right) ^{\alpha +1}}{\left( bx\right) ^{\alpha -1}%
}\left( \lambda _{3}(b,x,s,q,0)+\lambda _{4}(b,x,s,q,0)\right) ^{\frac{1}{q}%
}\right\} .
\end{eqnarray*}%
holds.
\end{corollary}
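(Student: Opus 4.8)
The plan is to mimic the template already used for Theorems \ref{2.2}--\ref{2.4}, replacing the power--mean inequality by H\"older's inequality. I would start from the identity in Lemma \ref{2.1}, apply the triangle inequality to bring the modulus inside the two integrals, obtaining
\begin{align*}
\left\vert S_{f}(g;\alpha;x,a,b)\right\vert
&\le \frac{(x-a)^{\alpha+1}}{(ax)^{\alpha-1}}\int_{0}^{1}\frac{t^{\alpha}}{(ta+(1-t)x)^{2}}\left\vert f^{\prime}\!\left(\frac{ax}{ta+(1-t)x}\right)\right\vert dt\\
&\quad +\frac{(b-x)^{\alpha+1}}{(bx)^{\alpha-1}}\int_{0}^{1}\frac{t^{\alpha}}{(tb+(1-t)x)^{2}}\left\vert f^{\prime}\!\left(\frac{bx}{tb+(1-t)x}\right)\right\vert dt.
\end{align*}
In each integral I would write the integrand as $t^{\alpha}\cdot\bigl[(ta+(1-t)x)^{-2}\,|f^{\prime}(\cdot)|\bigr]$ (and similarly with $b$) and apply H\"older's inequality with exponents $p$ and $q$, pushing the full power $t^{\alpha p}$ into the first factor. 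This yields $\bigl(\int_{0}^{1}t^{\alpha p}\,dt\bigr)^{1/p}=(\alpha p+1)^{-1/p}$ together with a remaining factor $\bigl(\int_{0}^{1}(ta+(1-t)x)^{-2q}|f^{\prime}(\cdot)|^{q}\,dt\bigr)^{1/q}$.

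Next I would use the harmonically $s$-convexity of $\left\vert f^{\prime}\right\vert^{q}$ to bound $\left\vert f^{\prime}\!\left(\frac{ax}{ta+(1-t)x}\right)\right\vert^{q}\le t^{s}|f^{\prime}(x)|^{q}+(1-t)^{s}|f^{\prime}(a)|^{q}$ and $\left\vert f^{\prime}\!\left(\frac{bx}{tb+(1-t)x}\right)\right\vert^{q}\le t^{s}|f^{\prime}(x)|^{q}+(1-t)^{s}|f^{\prime}(b)|^{q}$, and then split each remaining integral into the two pieces $\int_{0}^{1}\frac{t^{s}}{(ta+(1-t)x)^{2q}}dt$, $\int_{0}^{1}\frac{(1-t)^{s}}{(ta+(1-t)x)^{2q}}dt$ and the analogous ones with $b$. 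The key (and only genuine) computation is the evaluation of these four integrals: after writing $ta+(1-t)x=x\bigl(1-(1-\tfrac{a}{x})t\bigr)$ and $tb+(1-t)x=b\bigl(1-(1-\tfrac{x}{b})t\bigr)$, with $|1-\tfrac{a}{x}|<1$ and $|1-\tfrac{x}{b}|<1$, one recognises the Euler-type integral representation of ${}_{2}F_{1}$. These are exactly formulas (\ref{2-2b})--(\ref{2-2c}) specialised to $\alpha q=0$ (i.e.\ $\rho=0$), so the four pieces become $\lambda_{1}(a,x,s,q,0)$, $\lambda_{2}(a,x,s,q,0)$, $\lambda_{3}(b,x,s,q,0)$, $\lambda_{4}(b,x,s,q,0)$ in the notation of Theorem \ref{2.2}.

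Substituting these values back and collecting the two summands gives the stated bound with the prefactor $(\alpha p+1)^{-1/p}$. The main obstacle is purely bookkeeping: one must keep track of which exponent ($\alpha p$ versus the $2q$ in the denominator) is routed into which H\"older factor, and then match the resulting Beta and hypergeometric parameters correctly against the $\lambda_{i}$ definitions; no idea beyond Lemma \ref{2.1} and the standard hypergeometric integral formula is required. As a consistency check, setting $\alpha=1$ (so $\alpha p+1=p+1$, $\alpha q=0\Rightarrow\rho=0$) should recover Theorem \ref{1.9}, as noted in the remark following the statement.
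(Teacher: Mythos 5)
Your write-up is, step for step, the paper's proof of Theorem \ref{2.5} itself: the triangle inequality applied to Lemma \ref{2.1}, H\"older's inequality with the whole weight $t^{\alpha}$ routed into the $p$-factor so that $\left( \int_{0}^{1}t^{\alpha p}dt\right) ^{1/p}=\left( \alpha p+1\right) ^{-1/p}$, the harmonic $s$-convexity bound on $\left\vert f^{\prime }\right\vert ^{q}$, and the identification of the four surviving integrals with $\lambda _{i}(\cdot ,\cdot ,s,q,0)$ via (\ref{2-2b})--(\ref{2-2c}) at $\rho =0$. All of that is correct, but it is not the statement you were asked to prove. Nowhere do you use the hypothesis $|f^{\prime }(x)|\leq M$: the bound you end with still carries $\left\vert f^{\prime }\left( x\right) \right\vert ^{q}$, $\left\vert f^{\prime }\left( a\right) \right\vert ^{q}$, $\left\vert f^{\prime }\left( b\right) \right\vert ^{q}$ inside the $q$-th roots, whereas the corollary has $M$ factored out in front of $\left( \lambda _{1}+\lambda _{2}\right) ^{1/q}$ and $\left( \lambda _{3}+\lambda _{4}\right) ^{1/q}$. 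Your own consistency check (recovering Theorem \ref{1.9} at $\alpha =1$) confirms you have landed on the theorem, not the corollary.

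The missing step is the entire, admittedly one-line, content of the corollary: since $\lambda _{1},\dots ,\lambda _{4}\geq 0$, the hypothesis gives
\begin{equation*}
\lambda _{1}(a,x,s,q,0)\left\vert f^{\prime }\left( x\right) \right\vert
^{q}+\lambda _{2}(a,x,s,q,0)\left\vert f^{\prime }\left( a\right)
\right\vert ^{q}\leq M^{q}\left[ \lambda _{1}(a,x,s,q,0)+\lambda
_{2}(a,x,s,q,0)\right] ,
\end{equation*}
and taking $q$-th roots pulls out the factor $M$; the second summand is handled identically with $b$ in place of $a$. The paper does not redo the H\"older estimate here at all --- the corollary is obtained by specializing the already-established Theorem \ref{2.5} in exactly this way --- so you should either cite that theorem and perform the substitution, or at least append the substitution to the derivation you wrote. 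As it stands your argument is sound but stops one (essential) line short of the claimed inequality.
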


\begin{theorem}
\label{2.6}Let $f:I\subset \left( 0,\infty \right) \rightarrow 
\mathbb{R}
$ be a differentiable function on $I^{\circ }$, $a,b\in I$ with $a<b,$ and $%
f^{\prime }\in L[a,b].$ If $\left\vert f^{\prime }\right\vert ^{q}$ is
harmonically $s$-convex on $[a,b]$ for $q>1,\;\frac{1}{p}+\frac{1}{q}=1,$
then%
\begin{equation}
\left\vert S_{f}\left( g;\alpha ;x,a,b\right) \right\vert  \label{2-6}
\end{equation}%
\begin{eqnarray*}
&\leq &\left\{ \frac{\left( x-a\right) ^{\alpha +1}}{\left( ax\right)
^{\alpha -1}}\left( \lambda _{1}(a,x,0,p,\alpha p)\right) ^{\frac{1}{p}%
}\left( \frac{\left\vert f^{\prime }\left( x\right) \right\vert
^{q}+\left\vert f^{\prime }\left( a\right) \right\vert ^{q}}{s+1}\right) ^{%
\frac{1}{q}}\right. \\
&&+\left. \frac{\left( b-x\right) ^{\alpha +1}}{\left( bx\right) ^{\alpha -1}%
}\left( \lambda _{3}(b,x,0,p,\alpha p)\right) ^{\frac{1}{p}}\left( \frac{%
\left\vert f^{\prime }\left( x\right) \right\vert ^{q}+\left\vert f^{\prime
}\left( b\right) \right\vert ^{q}}{s+1}\right) ^{\frac{1}{q}}\right\} .
\end{eqnarray*}%
where $\lambda _{1}$, $\lambda _{2}$, $\lambda _{3}$ and $\lambda _{4}$ are
defined as in Theorem \ref{2.2}.
\end{theorem}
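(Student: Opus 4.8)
The plan is to start from Lemma \ref{2.1}, which expresses $S_{f}(g;\alpha;x,a,b)$ as a difference of two integrals, and apply the triangle inequality to pass to $\left\vert f^{\prime}\right\vert$ inside each integral. On each of the two resulting integrals I would apply H\"older's inequality with exponents $p$ and $q$, but—unlike in the proof of Theorem \ref{2.5}—I would group the factors differently: I would keep the weight $\left(ta+(1-t)x\right)^{-2}$ (resp. $\left(tb+(1-t)x\right)^{-2}$) together with $t^{\alpha}$ inside the $L^{p}$-factor, and leave only the convexity bracket $t^{s}\left\vert f'(x)\right\vert^{q}+(1-t)^{s}\left\vert f'(a)\right\vert^{q}$ in the $L^{q}$-factor. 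Concretely, for the first integral,
\begin{equation*}
\int_{0}^{1}\frac{t^{\alpha}}{\left(ta+(1-t)x\right)^{2}}\left\vert f'\!\left(\tfrac{ax}{ta+(1-t)x}\right)\right\vert dt \le \left(\int_{0}^{1}\frac{t^{\alpha p}}{\left(ta+(1-t)x\right)^{2p}}dt\right)^{\frac1p}\!\left(\int_{0}^{1}\left[t^{s}\left\vert f'(x)\right\vert^{q}+(1-t)^{s}\left\vert f'(a)\right\vert^{q}\right]dt\right)^{\frac1q},
\end{equation*}
using the harmonic $s$-convexity of $\left\vert f'\right\vert^{q}$ to bound the composed term.

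Next I would evaluate the two elementary integrals. The second factor is immediate: $\int_{0}^{1}t^{s}\,dt=\int_{0}^{1}(1-t)^{s}\,dt=\tfrac{1}{s+1}$, which produces the $\left(\tfrac{\left\vert f'(x)\right\vert^{q}+\left\vert f'(a)\right\vert^{q}}{s+1}\right)^{1/q}$ term in the statement. For the first factor I would invoke the integral representation of ${}_{2}F_{1}$ recalled in the introduction, exactly as in equation (\ref{2-2b}): writing $ta+(1-t)x = x\left(1-\left(1-\tfrac{a}{x}\right)t\right)$, one gets
\begin{equation*}
\int_{0}^{1}\frac{t^{\alpha p}}{\left(ta+(1-t)x\right)^{2p}}dt = \frac{\beta(\alpha p+1,1)}{x^{2p}}\,{}_{2}F_{1}\!\left(2p,\alpha p+1;\alpha p+2;1-\tfrac{a}{x}\right) = \lambda_{1}(a,x,0,p,\alpha p),
\end{equation*}
and symmetrically the $b$-integral equals $\lambda_{3}(b,x,0,p,\alpha p)$, using the $\lambda_{1},\lambda_{3}$ formulas from Theorem \ref{2.2} with the specialization $s=0$, $\vartheta=p$, $\rho=\alpha p$. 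Multiplying each H\"older estimate by the prefactors $\tfrac{(x-a)^{\alpha+1}}{(ax)^{\alpha-1}}$ and $\tfrac{(b-x)^{\alpha+1}}{(bx)^{\alpha-1}}$ from Lemma \ref{2.1} and adding gives the claimed inequality.

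The main obstacle is bookkeeping rather than conceptual: one has to check that the specialization of the parameters in $\lambda_{1}$ and $\lambda_{3}$ lines up correctly with the integrals actually produced by this particular H\"older splitting (note that here the exponent on $\left(ta+(1-t)x\right)$ is $2p$, not $2q$, so the role of $q$ in (\ref{2-2b}) is played by $p$, and there is no $(1-t)^{s}$ weight, hence $s=0$), and to verify the convergence condition $\left\vert 1-\tfrac{a}{x}\right\vert<1$ and $\left\vert 1-\tfrac{x}{b}\right\vert<1$ needed for the hypergeometric representation, which holds since $0<a\le x\le b$. One should also double-check that the degenerate cases $x=a$ or $x=b$ (where a prefactor vanishes) are handled by continuity. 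Finally, as with the previous theorems, a remark noting that setting $\alpha=1$ recovers Theorem \ref{1.10} would be appropriate.
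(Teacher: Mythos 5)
Your proposal matches the paper's own proof: the same Hölder splitting (with $t^{\alpha}\left(ta+(1-t)x\right)^{-2}$ absorbed into the $L^{p}$-factor and only the $s$-convexity bracket left in the $L^{q}$-factor), the same evaluation $\int_{0}^{1}t^{s}dt=\int_{0}^{1}(1-t)^{s}dt=\frac{1}{s+1}$, and the same identification of the remaining integrals with $\lambda _{1}(a,x,0,p,\alpha p)$ and $\lambda _{3}(b,x,0,p,\alpha p)$. The argument is correct and essentially identical to the paper's.
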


\begin{proof}
From Lemma \ref{2.1}, H\"{o}lder's inequality and the harmonically convexity
of $\left\vert f^{\prime }\right\vert ^{q}$ on $[a,b],$we have%
\begin{eqnarray*}
&&\left\vert S_{f}\left( g;\alpha ;x,a,b\right) \right\vert \\
&\leq &\frac{\left( x-a\right) ^{\alpha +1}}{\left( ax\right) ^{\alpha -1}}%
\left( \dint\limits_{0}^{1}\frac{t^{\alpha p}}{\left( ta+(1-t)x\right) ^{2p}}%
dt\right) ^{\frac{1}{p}} \\
&&\times \left( \dint\limits_{0}^{1}\left[ t^{s}\left\vert f^{\prime }\left(
x\right) \right\vert ^{q}+(1-t)^{s}\left\vert f^{\prime }\left( a\right)
\right\vert ^{q}\right] dt\right) ^{\frac{1}{q}}
\end{eqnarray*}%
\begin{eqnarray*}
&&+\frac{\left( b-x\right) ^{\alpha +1}}{\left( bx\right) ^{\alpha -1}}%
\left( \dint\limits_{0}^{1}\frac{t^{\alpha p}}{\left( tb+(1-t)x\right) ^{2p}}%
dt\right) ^{\frac{1}{p}} \\
&&\times \left( \dint\limits_{0}^{1}\left[ t^{s}\left\vert f^{\prime }\left(
x\right) \right\vert ^{q}+(1-t)^{s}\left\vert f^{\prime }\left( b\right)
\right\vert ^{q}\right] dt\right) ^{\frac{1}{q}}
\end{eqnarray*}%
\begin{eqnarray*}
&\leq &\left\{ \frac{\left( x-a\right) ^{\alpha +1}}{\left( ax\right)
^{\alpha -1}}\left( \lambda _{1}(a,x,0,p,\alpha p)\right) ^{\frac{1}{p}%
}\left( \frac{\left\vert f^{\prime }\left( x\right) \right\vert
^{q}+\left\vert f^{\prime }\left( a\right) \right\vert ^{q}}{s+1}\right) ^{%
\frac{1}{q}}\right. \\
&&+\left. \frac{\left( b-x\right) ^{\alpha +1}}{\left( bx\right) ^{\alpha -1}%
}\left( \lambda _{3}(b,x,0,p,\alpha p)\right) ^{\frac{1}{p}}\left( \frac{%
\left\vert f^{\prime }\left( x\right) \right\vert ^{q}+\left\vert f^{\prime
}\left( b\right) \right\vert ^{q}}{s+1}\right) ^{\frac{1}{q}}\right\} .
\end{eqnarray*}%
This completes the proof.
\end{proof}

\begin{remark}
\bigskip In Theorem \ref{2.6}, if we take $\alpha =1$, then the identity (%
\ref{2-6}) reduces the identity (\ref{1-10}) of Theorem \ref{1-10}.
\end{remark}

\begin{corollary}
\bigskip In Theorem \ref{2.5}, if $|f^{\prime }(x)|\leq M$, $x\in \left[ a,b%
\right] ,$ then inequality%
\begin{equation*}
\left\vert S_{f}\left( g;\alpha ;x,a,b\right) \right\vert \leq M\left( \frac{%
2}{s+1}\right) ^{\frac{1}{q}}
\end{equation*}%
\begin{equation*}
\times \left\{ \frac{\left( x-a\right) ^{\alpha +1}}{\left( ax\right)
^{\alpha -1}}\lambda _{1}^{\frac{1}{p}}(a,x,0,p,\alpha p)+\frac{\left(
b-x\right) ^{\alpha +1}}{\left( bx\right) ^{\alpha -1}}\lambda _{3}^{\frac{1%
}{p}}(b,x,0,p,\alpha p)\right\}
\end{equation*}%
holds.
\end{corollary}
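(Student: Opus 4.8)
The plan is to follow the same route as in the proofs of Theorems \ref{2.2}--\ref{2.5}: start from the fractional identity of Lemma \ref{2.1}, pass to absolute values, and bound each of the two resulting integrals. What distinguishes the present estimate from that of Theorem \ref{2.5} is the way H\"older's inequality is applied: here I would keep the \emph{entire} kernel $t^{\alpha}/(ta+(1-t)x)^{2}$ inside the $L^{p}$ factor, so that the $L^{q}$ factor carries only $|f^{\prime }|^{q}$ and is controlled by harmonic $s$-convexity alone, the kernel playing no role in that estimate.

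In detail, Lemma \ref{2.1} together with the triangle inequality gives
\[
\left\vert S_{f}\left( g;\alpha ;x,a,b\right) \right\vert \leq \frac{\left( x-a\right) ^{\alpha +1}}{\left( ax\right) ^{\alpha -1}}\,I_{a}+\frac{\left( b-x\right) ^{\alpha +1}}{\left( bx\right) ^{\alpha -1}}\,I_{b},
\]
where $I_{a}=\int_{0}^{1}\frac{t^{\alpha }}{(ta+(1-t)x)^{2}}\bigl\vert f^{\prime }\bigl(\tfrac{ax}{ta+(1-t)x}\bigr)\bigr\vert \,dt$ and $I_{b}$ is the corresponding integral with $b$ in place of $a$. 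Writing the integrand of $I_{a}$ as $\bigl[t^{\alpha }/(ta+(1-t)x)^{2}\bigr]\cdot \bigl\vert f^{\prime }\bigl(\tfrac{ax}{ta+(1-t)x}\bigr)\bigr\vert $ and applying H\"older with exponents $p$ and $q$ gives
\[
I_{a}\leq \left( \int_{0}^{1}\frac{t^{\alpha p}}{(ta+(1-t)x)^{2p}}\,dt\right) ^{1/p}\left( \int_{0}^{1}\Bigl\vert f^{\prime }\Bigl(\tfrac{ax}{ta+(1-t)x}\Bigr)\Bigr\vert ^{q}dt\right) ^{1/q},
\]
and symmetrically for $I_{b}$ (with $tb+(1-t)x$ in the kernel).

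It then remains to evaluate the two factors. For the second, note that $\tfrac{ax}{ta+(1-t)x}=\tfrac{xa}{(1-t)x+ta}$ is the harmonic mean of $x$ and $a$ with parameter $1-t$, so harmonic $s$-convexity of $|f^{\prime }|^{q}$ yields $\bigl\vert f^{\prime }\bigl(\tfrac{ax}{ta+(1-t)x}\bigr)\bigr\vert ^{q}\leq t^{s}|f^{\prime }(x)|^{q}+(1-t)^{s}|f^{\prime }(a)|^{q}$; integrating over $[0,1]$ gives $\bigl(|f^{\prime }(x)|^{q}+|f^{\prime }(a)|^{q}\bigr)/(s+1)$, which accounts for the $\bigl(\,\cdot\,/(s+1)\bigr)^{1/q}$ factors in (\ref{2-6}). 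For the first factor, $\int_{0}^{1}t^{\alpha p}/(ta+(1-t)x)^{2p}\,dt$ is precisely the integral evaluated in (\ref{2-2b}) with $q$ replaced by $p$ and $s$ replaced by $0$, hence equals $\lambda _{1}(a,x,0,p,\alpha p)$; likewise the kernel integral in $I_{b}$ equals $\lambda _{3}(b,x,0,p,\alpha p)$ by (\ref{2-2c}). Inserting these values together with the weights $\tfrac{(x-a)^{\alpha +1}}{(ax)^{\alpha -1}}$ and $\tfrac{(b-x)^{\alpha +1}}{(bx)^{\alpha -1}}$ supplied by Lemma \ref{2.1} produces (\ref{2-6}).

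I do not expect a genuine obstacle here: every ingredient --- the integration-by-parts identity, the hypergeometric evaluation of the kernel integrals, and the $s$-convexity estimate --- already appears in the proof of Theorem \ref{2.2} and only has to be recombined. The one point that deserves attention is bookkeeping in the convexity step: one must check that the correct interpolation parameter for $\tfrac{ax}{ta+(1-t)x}$, regarded as a harmonic mean of $x$ and $a$, is $1-t$ rather than $t$, so that the coefficients of $|f^{\prime }(x)|^{q}$ and $|f^{\prime }(a)|^{q}$ come out as $t^{s}$ and $(1-t)^{s}$ (and symmetrically for the integral over $[x,b]$), since interchanging them would alter the final bound.
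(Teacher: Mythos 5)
Your argument is mathematically correct, but it proves more than the corollary requires and stops one line short of what the corollary actually asserts. In the paper this statement carries no proof at all: it is Theorem \ref{2.6} (the reference to Theorem \ref{2.5} in the statement is evidently a misprint, since the bound involves $\lambda _{1}(a,x,0,p,\alpha p)$, $\lambda _{3}(b,x,0,p,\alpha p)$ and the factor $\left( \tfrac{2}{s+1}\right) ^{1/q}$, all of which come from inequality (\ref{2-6})) specialized to $|f^{\prime }|\leq M$; one simply substitutes $|f^{\prime }(x)|^{q},|f^{\prime }(a)|^{q},|f^{\prime }(b)|^{q}\leq M^{q}$ into (\ref{2-6}) and uses $\left( \tfrac{2M^{q}}{s+1}\right) ^{1/q}=M\left( \tfrac{2}{s+1}\right) ^{1/q}$. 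That substitution is the entire content of the corollary, and it is the one step your write-up leaves implicit: you conclude by ``producing (\ref{2-6})'', i.e.\ by re-deriving Theorem \ref{2.6} itself rather than its $M$-specialization. Everything you do along the way --- the triangle inequality applied to Lemma \ref{2.1}, the H\"{o}lder split that keeps the whole kernel $t^{\alpha }/(ta+(1-t)x)^{2}$ in the $L^{p}$ factor, the identification of the kernel integrals with $\lambda _{1}(a,x,0,p,\alpha p)$ and $\lambda _{3}(b,x,0,p,\alpha p)$ via (\ref{2-2b})--(\ref{2-2c}), and the harmonic $s$-convexity estimate integrating to $(|f^{\prime }(x)|^{q}+|f^{\prime }(a)|^{q})/(s+1)$ --- coincides with the paper's own proof of Theorem \ref{2.6}, and your bookkeeping of the convexity weights ($t^{s}$ on $|f^{\prime }(x)|^{q}$, $(1-t)^{s}$ on $|f^{\prime }(a)|^{q}$) agrees with the paper's. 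So the route is sound and self-contained; to finish, either append the one-line bound $|f^{\prime }|\leq M$ at the end, or, more economically, cite Theorem \ref{2.6} and perform only that line.
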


\end{document}